\definecolor{linkred}{RGB}{199,21,133}
\definecolor{linkblue}{RGB}{16, 78, 139}
	\titlespacing{\section}{0pt}{12pt}{0pt}
	\titlespacing{\subsection}{0pt}{6pt}{0pt}
\long\def\@footnotetext#1{%
\H@@footnotetext{%
\ifHy@nesting 
\hyper@@anchor{\@currentHref}{#1}%
\else 
\Hy@raisedlink{\hyper@@anchor{\@currentHref}{\relax}}#1%
\fi 
}}
\def\@footnotemark{%
\leavevmode 
\ifhmode\edef\@x@sf{\the\spacefactor}\nobreak\fi 
\H@refstepcounter{Hfootnote}%
\hyper@makecurrent{Hfootnote}%
\hyper@linkstart{link}{\@currentHref}%
\@makefnmark 
\hyper@linkend 
\ifhmode\spacefactor\@x@sf\fi 
\relax 
}%
\renewcommand*\@footnotemark{%
\leavevmode 
\ifhmode 
\edef\@x@sf{\the\spacefactor}%
\FN@mf@check 
\nobreak 
\fi 
\H@refstepcounter{Hfootnote}%
\hyper@makecurrent{Hfootnote}%
\hyper@linkstart{link}{\@currentHref}%
\@makefnmark 
\hyper@linkend 
\ifFN@pp@towrite 
\FN@pp@writetemp 
\FN@pp@towritefalse 
\fi 
\FN@mf@prepare 
\ifhmode\spacefactor\@x@sf\fi 
\relax%
}%
\newtheorem{thm}{Theorem}[section]
\newtheorem{coro}[thm]{Corollary}
\newtheorem{lem}[thm]{Lemma}
\theoremstyle{definition}
\theoremstyle{remark}
\newtheorem{remk}[thm]{Remark}
\newtheorem{claim}[thm]{Claim}
\renewcommand{\phi}{\varphi}
\newcommand{\LA}{{\rm La}}
\newcommand{\MM}{{\mathcal M}}
\newcommand{\TT}{{\mathrm{Teich}}}
\newcommand{\be}{ \begin{equation} }
\newcommand{\ee}{ \end{equation} }
\DeclareMathOperator{\sech}{sech}
\long\def\symbolfootnote[#1]#2{\begingroup%
\def\thefootnote{\fnsymbol{footnote}}\footnote[#1]{#2}\endgroup}
\def\blfootnote{\xdef\@thefnmark{}\@footnotetext}
\date{\today}
\begin{document}

{\Large \bfseries 
Measuring pants
}

{\large 
Nhat Minh Doan\symbolfootnote[1]{
Research supported by FNR PRIDE15/10949314/GSM.}, Hugo Parlier\symbolfootnote[2]{
Research partially supported by ANR/FNR project SoS, INTER/ANR/16/11554412/SoS, ANR-17-CE40-0033.} 
and Ser Peow Tan\symbolfootnote[3]{
Research partially supported by R146-000-289-114. \vspace{.1cm} \\
{\em 2020 Mathematics Subject Classification:} Primary: 32G15, 57K20, 37D20. Secondary: 30F10, 30F60, 53C23, 57M50. \\
{\em Key words and phrases:} Geometric identities, hyperbolic surfaces, pairs of pants.}
}

\vspace{0.5cm}

{\bf Abstract.} 
We investigate the terms arising in an identity for hyperbolic surfaces proved by Luo and Tan, namely showing that they vary monotonically in terms of lengths and that they verify certain convexity properties. Using these properties, we deduce two results. As a first application, we show how to deduce a theorem of Thurston which states, in particular for closed hyperbolic surfaces, that if a simple length spectrum "dominates" another, then in fact the two surfaces are isometric. As a second application, we show how to find upper bounds on the number of pairs of pants of bounded length that only depend on the boundary length and the topology of the surface.
\vspace{1cm}

\section{Introduction}

In the last few decades, identities have played an integral part of the study of hyperbolic surfaces and their moduli spaces. They are generally equations which express a geometric quantity or surface invariant in terms of the lengths of a family of curves. For instance the McShane identity \cite{McShane} is a way of expressing the horocyclic boundary of a cusp in terms of the lengths of embedded pants. Around the same time, Basmajian \cite{Basmajian} proved an identity relating the boundary length of a surface with boundary to the set of lengths of orthogeodesics. The Bridgeman identity \cite{Bridgeman} used these same orthogeodesic lengths to express the volume of the unit tangent bundle. This same volume of the unit tangent bundle was decomposed by Luo and Tan in terms of the boundary lengths of embedded pants and one-holed tori. The Luo-Tan identity is the first of these identities that doesn't require the surface to have any cusp or geodesic boundary.

One interpretation of these identities is that they associate a measure to each element of the index set, and although these individual measures vary in terms of the geometry of the surface, their sum remains invariant. We investigate the measures of the Luo-Tan identity, and a refinement of the identity due to Hu and Tan. A precise version of the identities (and in particular a description of the index sets) will be given in the next section, but for reference we recall them here. The Luo-Tan identity states:
\begin{equation*}
\sum_{P\in \mathcal{P} }\varphi(P)+\sum_{T\in \mathcal{T}}\tau(T)=8\pi^2(g-1).
\end{equation*}
where the sums are taken over so-called properly embedded pairs of pants and one holed tori. The measures, $\varphi$ and $\tau$, are functions that depend explicitly on the geometries of $P$ or $T$. The Hu-Tan variation of the identity can be stated as follows:
\begin{equation*}
\sum_{P\in \mathcal{P} }\varphi(P)+\sum_{P\in \mathcal{I}}\eta(P)=8\pi^2(g-1),
\end{equation*}
where the sums are taken over properly and improperly embedded pants, $\varphi$ is the same as before and $\eta$ is a different function from $\varphi$ but which also depends explicitly on the geometry of $P$. 

Our main results are about analytic properties of the measures. We state the most striking (and useful) properties here, which concern the measures $\varphi$ and $\eta$. As they depend only on the geometry of the pants, they depend only on the boundary lengths of the pants. Hence $\varphi$ depends on three real parameters and $\eta$ only two as two of its boundary curves are of equal length. For practical reasons it is useful to consider, instead of length $\ell$, the parameter $t:= e^{-\ell/2}$. With these parameters, our results can be expressed as follows.

\begin{thm}\label{thm:main}
The functions $\varphi$ and $\eta$ are strictly increasing on $(0,1]^3$ and $(0,1]^2$, respectively, and satisfy
$$
\varphi(x,y,y) \leq \eta(x,y).
$$
Furthermore if we set $t:=\sqrt[3]{xyz}$ then
$$\varphi(x,y,z) \geq \varphi(t,t,t)> -24t^3\log(t)+24t^3 $$
for all $x,y,z\in(0,1]$.
\end{thm}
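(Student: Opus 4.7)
The plan is to work directly from explicit closed-form expressions for the measures $\varphi$ and $\eta$ in the variables $x,y,z$, which can be derived from the geometric description of the Luo--Tan and Hu--Tan identities. Once such formulas are in hand, the theorem decomposes into four separate analytic claims: (a) strict monotonicity in each argument, (b) the pointwise comparison $\varphi(x,y,y)\leq \eta(x,y)$, (c) the Schur-type inequality $\varphi(x,y,z)\geq \varphi(t,t,t)$ with $t=\sqrt[3]{xyz}$, and (d) the one-variable lower bound $\varphi(t,t,t)>-24t^3\log(t)+24t^3$.

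For (a), I would compute the partial derivatives $\partial_x\varphi$, $\partial_y\varphi$, $\partial_z\varphi$ and the corresponding ones for $\eta$, and attempt to factor each as a product of terms manifestly positive on $(0,1]$. Natural candidates for such factors arise from pants trigonometry: combinations like $1-xy$, $1-yz$, $1-xyz$ are positive whenever $x,y,z\in(0,1]$, and the strategy is to arrange the derivatives in a form where only such factors appear. For (b), the most direct route is to expand the difference $\eta(x,y)-\varphi(x,y,y)$ and recognise it as a sum or product of nonnegative expressions; a more conceptual alternative would be to interpret both measures as integrals over configuration spaces attached to proper and improper embeddings and to construct an explicit injection between them.

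The main obstacle is (c). After the change of variables $u=-2\log x$, $v=-2\log y$, $w=-2\log z$, the constraint $xyz=t^3$ becomes $u+v+w=\text{const}$, and $\varphi(t,t,t)$ is the value at the diagonal point of that plane. By symmetry, the diagonal is automatically a critical point of $\varphi$ restricted to the constraint, so it suffices to show that the Hessian of $\varphi$, viewed as a function of $(u,v,w)$, is positive semidefinite on the tangent plane to the constraint; this is essentially a convexity statement I would verify by direct computation using the explicit formula for $\varphi$. With (c) in hand, (d) reduces to a one-variable calculus problem: differentiate $\varphi(t,t,t)+24t^3\log t-24t^3$ on $(0,1]$, check the sign of the derivative together with the limiting behaviour as $t\to0^+$ and at $t=1$, and conclude strict positivity. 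The Hessian analysis in (c) is where I expect the bulk of the technical work, as controlling a three-variable symmetric form is considerably more delicate than the one-variable estimates needed elsewhere.
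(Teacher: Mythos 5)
Your plan for parts (a), (b), and (d) matches the paper's strategy: compute partial derivatives, bound each logarithm by a suitable rational function (e.g.\ $\log t\ge 1-1/t$, or a truncated Taylor series for $\log$ around $1$), and verify the resulting rational expression is positive. For (d) the paper does exactly what you describe in its Lemma~\ref{a}: set $f(t)=\varphi(t,t,t)+24t^3\log t-24t^3$, show $f'>0$ on $(0,1)$, and combine with $\lim_{t\to0^+}f(t)=0$.

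Part (c) is where you diverge substantially, and it is worth flagging a potential gap. Instead of a Hessian computation on the constraint plane, the paper proves the \emph{two-variable symmetrization} inequality $\varphi(x,y,z)\ge\varphi(x,\sqrt{yz},\sqrt{yz})$ (Lemma~\ref{c}), and then obtains the diagonal bound by iterating the map $f(x,y,z)=(x^{1/2}(yz)^{1/4},\,x^{1/2}(yz)^{1/4},\,(yz)^{1/2})$, which preserves $xyz$ and whose iterates converge to $(\sqrt[3]{xyz},\sqrt[3]{xyz},\sqrt[3]{xyz})$. This gives $\varphi(x,y,z)\ge\varphi(t,t,t)$ in the limit, entirely avoiding the $3\times3$ Hessian. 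Two remarks on your Hessian route: first, you would need positive semidefiniteness of the restricted Hessian at \emph{every} point of the constraint triangle, not just at the centroid; a critical point plus PSD Hessian at that point only yields a local minimum. Second, $\varphi$ is a sum of twelve Rogers dilogarithms with arguments that are ratios of polynomials in $x,y,z$, so directly controlling the sign of a $2\times2$ reduced Hessian everywhere on the domain is likely to be considerably heavier than the paper's reduction, whose key step (Lemma~\ref{c}) is shown by splitting $\partial_x\varphi(x,y,z)-\partial_x\varphi(x,\sqrt{yz},\sqrt{yz})$ into two pieces and proving each is nonnegative by elementary log estimates, together with convexity of the single-variable function $t\mapsto(1-e^t)\log(1-e^t)/e^t$. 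In short: your decomposition of the theorem is the right one, (a), (b), (d) are on track, but for (c) the paper's iterated mixing lemma is both a different and a more tractable route than the global Hessian analysis you sketch, and the Hessian route as stated needs the strengthening from ``PSD at the critical point'' to ``PSD on the whole constraint slice''.
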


In particular this says that the measures are strictly decreasing with respect to boundary length. One might expect this as they necessarily converge to $0$ as the lengths increase (because there are infinitely many terms in the sum which adds up to something finite), but in fact there is no obvious geometric reason for this to hold infinitesimally and our proof is entirely analytic. As in Bridgeman's identity, the functions involve Rogers' dilogarithm function and have an intrinsic interest, but our original motivation for studying them was for possible applications. 

From our result, we are able to deduce a few corollaries. As a first application, we recover a well-known and useful theorem of Thurston's about dominating length spectra \cite{Thurston}. 

\begin{coro}[Thurston] If $X$ and $Y$ are marked and closed hyperbolic surfaces of genus $g$ that satisfy $\ell_X(\gamma) \geq \ell_Y(\gamma)$ for all simple closed geodesics $\gamma$, then $X=Y$. 
\end{coro}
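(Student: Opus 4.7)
The plan is to use the Hu-Tan identity together with the strict monotonicity in Theorem \ref{thm:main} to show that each term in the identity takes the same value on $X$ and $Y$, from which equality of all simple closed geodesic lengths follows.

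First I would exploit the topological nature of ``(im)properly embedded'': whether a pants embeds properly or improperly depends only on its isotopy class, so the marking (a homotopy equivalence $X\to Y$) induces canonical bijections $\mathcal{P}_X \leftrightarrow \mathcal{P}_Y$ and $\mathcal{I}_X \leftrightarrow \mathcal{I}_Y$ that identify each pants with its image, whose cuffs are the corresponding simple closed geodesics on the two surfaces. Passing to the parameter $t:=e^{-\ell/2}$, the hypothesis $\ell_X(\gamma)\geq \ell_Y(\gamma)$ for all simple closed geodesics $\gamma$ becomes $t_X(\gamma)\leq t_Y(\gamma)$ for each such cuff.

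Next I would apply the monotonicity in Theorem \ref{thm:main}. Since $\varphi$ and $\eta$ are strictly increasing in each of the $t$-parameters on $(0,1]$, one gets term by term
$$
\varphi(P_X) \leq \varphi(P_Y) \qquad \text{and} \qquad \eta(P_X) \leq \eta(P_Y),
$$
with equality only when the relevant boundary lengths on $X$ and $Y$ coincide. Summing over the common index sets and invoking the Hu-Tan identity on both surfaces gives
$$
8\pi^2(g-1) = \sum_{P\in \mathcal{P}}\varphi(P_X) + \sum_{P\in\mathcal{I}}\eta(P_X) \leq \sum_{P\in \mathcal{P}}\varphi(P_Y) + \sum_{P\in\mathcal{I}}\eta(P_Y) = 8\pi^2(g-1),
$$
so every term-by-term inequality must in fact be an equality. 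Strict monotonicity then forces $\ell_X(\gamma) = \ell_Y(\gamma)$ for every cuff $\gamma$ of every embedded pants.

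To finish, I would observe that every simple closed geodesic on a closed surface of genus $g\geq 2$ extends to a pants decomposition and therefore appears as a cuff of some properly embedded pants. Hence the marked simple length spectra of $X$ and $Y$ coincide, and the classical fact that the marked simple length spectrum is a complete invariant of a closed hyperbolic structure yields $X=Y$. The main obstacle I anticipate is verifying the compatibility in the first step: one must check that the topological notion of properly versus improperly embedded is preserved by the marking, so that corresponding pants genuinely match in the two Hu-Tan sums. Once that compatibility is in place, the remainder is a clean comparison of two absolutely convergent sums whose term-wise inequality is forced to an equality by the identity.
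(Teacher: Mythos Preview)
Your proof is correct and follows essentially the same route as the paper: apply the Hu--Tan identity on both $X$ and $Y$, use the strict monotonicity of $\varphi$ and $\eta$ from Theorem~\ref{thm:main} to get term-wise inequalities, conclude term-wise equality since both sums equal $8\pi^2(g-1)$, and finish by rigidity of the marked simple length spectrum. Your write-up is in fact slightly more explicit than the paper's about why the marking identifies the index sets and why every simple closed geodesic arises as a cuff, but the argument is the same.
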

The surfaces $X$ and $Y$ are points in Teichm\"uller space (the space of marked hyperbolic metrics) and Thurston used this result to deduce a positivity result for his asymmetric metric on Teichm\"uller space, related to Lipschitz maps between hyperbolic surfaces.

It should be noted that the same result for surfaces with cusps is easily deduced from McShane's identity. Indeed, the summands in the McShane identity are of the form $\frac{1}{e^{(\ell(\alpha)+\ell(\beta))/2}+1}$ and thus are obviously strictly decreasing in both $\ell(\alpha)$ and $\ell(\beta)$. A more general observation of this type can be found in the work of Charette and Goldman \cite{Charette-Goldman}.

As a second application, we count pants, and find an upper bound on the number of pants of total boundary length $L$ a surface of genus $g$ can have.
\begin{coro}\label{cor:pants}
A closed hyperbolic surface $X$ of genus $g$ has strictly less than 
$$ \frac{2\pi^2(g-1)e^{L/2}}{L+6}$$
embedded geodesic pants of total boundary length less than $L$. 
\end{coro}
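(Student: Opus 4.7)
My plan is to combine the Luo--Tan identity with the explicit lower bound on $\varphi$ furnished by Theorem~\ref{thm:main}. The identity writes $8\pi^2(g-1)$ as a sum of terms $\varphi(P)$ and $\tau(T)$; all of the $\varphi(P)$ are strictly positive by the lower bound of Theorem~\ref{thm:main}, and I would first verify from the construction of the identity that each $\tau(T) \geq 0$ as well, so that the partial sum over any finite subfamily of embedded pants is bounded above by $8\pi^2(g-1)$.

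Next, for an embedded pant $P$ with boundary lengths $\ell_1,\ell_2,\ell_3$, set $L_P := \ell_1+\ell_2+\ell_3$ and $t := e^{-L_P/6} = \sqrt[3]{xyz}$. Theorem~\ref{thm:main} then gives
$$
\varphi(P) \;\geq\; \varphi(t,t,t) \;>\; -24\, t^3 \log(t) + 24\, t^3 \;=\; (4 L_P + 24)\, e^{-L_P/2},
$$
after substituting $t^3 = e^{-L_P/2}$ and $\log t = -L_P/6$. A one-line derivative check gives $\frac{d}{ds}\bigl[(4s+24)e^{-s/2}\bigr] = -2(s+4)e^{-s/2} < 0$, so $s \mapsto (4s+24)e^{-s/2}$ is strictly decreasing on $(0,\infty)$. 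Hence whenever $L_P < L$, the bound above refines to the $P$-independent strict inequality $\varphi(P) > (4L+24)\, e^{-L/2}$.

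Summing this over the $N$ embedded pants of total boundary length less than $L$, and comparing with the Luo--Tan identity, produces
$$
N\,(4L+24)\,e^{-L/2} \;<\; \sum_{P \in \mathcal{P}_L} \varphi(P) \;\leq\; 8\pi^2(g-1),
$$
which rearranges to $N < 2\pi^2(g-1)\,e^{L/2}/(L+6)$, as claimed. The main analytic obstacle is already behind us: Theorem~\ref{thm:main} does all the hard work, and what remains is only this brief substitution together with the mild check that the $\tau$-terms contribute non-negatively to the identity.
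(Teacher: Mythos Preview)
Your argument is sound for \emph{properly} embedded pants, but it diverges from the paper's route and, depending on how one reads ``embedded geodesic pants,'' leaves a gap.

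The paper proves this corollary as Theorem~\ref{thm:pants}, where $\mathcal{Y}(X)=\mathcal{P}(X)\cup\mathcal{I}(X)$ includes \emph{both} properly and improperly embedded pants. To handle both at once, the paper uses the Hu--Tan identity (Equation~\ref{eq2}) rather than the original Luo--Tan identity: each improperly embedded $P\in\mathcal{I}$ contributes $\eta(P)$, and Theorem~\ref{thm:main} supplies $\eta(x,y)\geq\varphi(x,y,y)\geq\varphi(t,t,t)$ with $t=\sqrt[3]{xy^2}$, so improperly embedded pants obey the same lower bound $(4L+24)e^{-L/2}$. Your approach via Equation~\ref{eq1} gives no term attached to an individual improperly embedded pant (they are hidden inside the torus terms $\tau(T)$), so your final inequality bounds only $|\mathcal{P}_L|$, not $|\mathcal{Y}_L|$.

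If you intend the corollary only for properly embedded pants, your argument is correct and slightly more elementary: you avoid the $\eta\geq\varphi$ comparison and need only the nonnegativity of $\tau$, which indeed follows from its construction as a volume. Your explicit monotonicity check that $s\mapsto(4s+24)e^{-s/2}$ is decreasing is a nice touch that the paper leaves implicit. But to match the paper's stronger statement (Theorem~\ref{thm:pants}), you must switch to the Hu--Tan identity and invoke the inequality $\varphi(x,y,y)\leq\eta(x,y)$ from Theorem~\ref{thm:main}.
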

This result is related to other results about curve counting. Of course, by the celebrated results of Mirzakhani \cite{Mirzakhani}, the number of pairs of pants grows asymptotically like $C_XL^{6g-6}$ where $C_X$ is a constant that depends on the surface, so it is far from optimal for large $L$. Nonetheless, the result above is an absolute upper bound that doesn't depend on the geometry of the surface. In particular it holds for all $L>0$, including relatively small $L$. A more directly related result is a result of Buser \cite{Buser} which says that a surface of genus $g$ has at most $(g-1)e^{L+6}$ primitive closed geodesics of length at most $L$. This result is used, among other things, to find upper bounds on the number of surfaces that can have the same length spectra are not  isometric. Also notice that Buser's result can be applied to find an upper bound on the number of pants of total length $L$, but the result is a lot weaker. In a nutshell, Buser's upper bound and the above corollary are related, but do not follow from one another.

One of the novelties of the Luo-Tan identity is that it {\it also} holds for closed surfaces, hence for simplicity we've stated our results in this context. However, with the usual caveats, they generalize without difficulty to surfaces with cusps. This can be seen either by applying the same methods, or by considering cusped surfaces as lying in the compactification of the underlying moduli space.

\section{The Luo-Tan identity and variations}

In this section we recall a precise formulation of the identity and rewrite it in a slightly different form more convenient for our purposes.

Let $X$ be a closed, orientable hyperbolic surface of genus $g\geq 2$. We will generally be thinking of $X$ as {\it marked}, hence as a point in Teichm\"uller space $\TT_g$, or if the marking is not essential, in moduli space $\MM_g$. (Marked in this setting can be thought of as knowing the names of all simple closed geodesics.) We shall be investigating different small complexity subsurfaces of $X$. The subsurfaces we consider are all either considered up to isotopy or equivalently, we consider their geodesic realizations (their boundary curves are simple closed geodesics). An (geodesic) embedded three-holed sphere $P\subset X$ (or pair of pants) is said to be properly embedded if its closure is embedded. (In other words, all three of its boundary curves are non-isotopic.) Otherwise its closure is an embedded one-holed torus and it is said to be improperly embedded.

With that in hand, the Luo-Tan identity \cite{Luo-Tan} states the following:
\begin{equation}\label{eq1}
\sum_{P\in \mathcal{P} }\varphi(P)+\sum_{T\in \mathcal{T}}\tau(T)=8\pi^2(g-1).
\end{equation}

The right hand side of the identity is the volume of unit tangent bundle. The left hand side has two index sets. 
The first ($\mathcal{P}$) is the set of properly embedded geodesic pants on $X$ whereas the second ($\mathcal{T}$) is the set of embedded geodesic one holed tori. 

The functions depend explicitly on the geometries of the pants and tori. Hence both can be made to depend on three real variables. We think of these functions as being measures on the set of pants and tori, where the measures sum up to full volume. 

By cutting a one holed torus along a simple closed geodesic, one obtains a pair of pants. Hence tori contain infinitely many distinct geodesic pants with embedded interior but, because of their boundary geodesics, their closures fail to be embedded. Extending the function for embedded pants to these {\it improperly} embedded pants leads to under counting and hence an inequality. Nonetheless, Hu and Tan \cite{Hu-Tan} found a way of decomposing the measure associated to a one holed torus as an infinite sum of measures associated to improperly embedded pants. Putting together the results leads to a new identity where the second summand set is on improperly embedded pants (the set of which we denote by $\mathcal{I}$):
\begin{equation}\label{eq2}
\sum_{P\in \mathcal{P} }\varphi(P)+\sum_{P\in \mathcal{I}}\eta(P)=8\pi^2(g-1),
\end{equation} 
in which $\varphi$ and $\eta$ are functions that depends on the geometry of $P$. 
\\

\underline{{\it The function $\varphi$ on embedded pairs of pants}}

We now describe the functions explicitly. Let $P$ be a pair of pants with geodesic boundaries $\gamma_1, \gamma_2, \gamma_3$ of lengths $\ell_1,\ell_2,\ell_3$. For $\{i,j,k\}=\{1,2,3\}$, let $m_i$ be the length of the shortest geodesic arc between $\gamma_j$ and $\gamma_k$ for $\{i,j,k\} = \{1,2,3\}$.\\

\begin{figure}[h]
\leavevmode \SetLabels
\L(.48*.96) $\gamma_1$\\%
\L(.31*.15) $\gamma_2$\\%
\L(.66*.15) $\gamma_3$\\%
\L(.49*.19) $m_1$\\
\endSetLabels
\begin{center}
\AffixLabels{\centerline{\epsfig{file =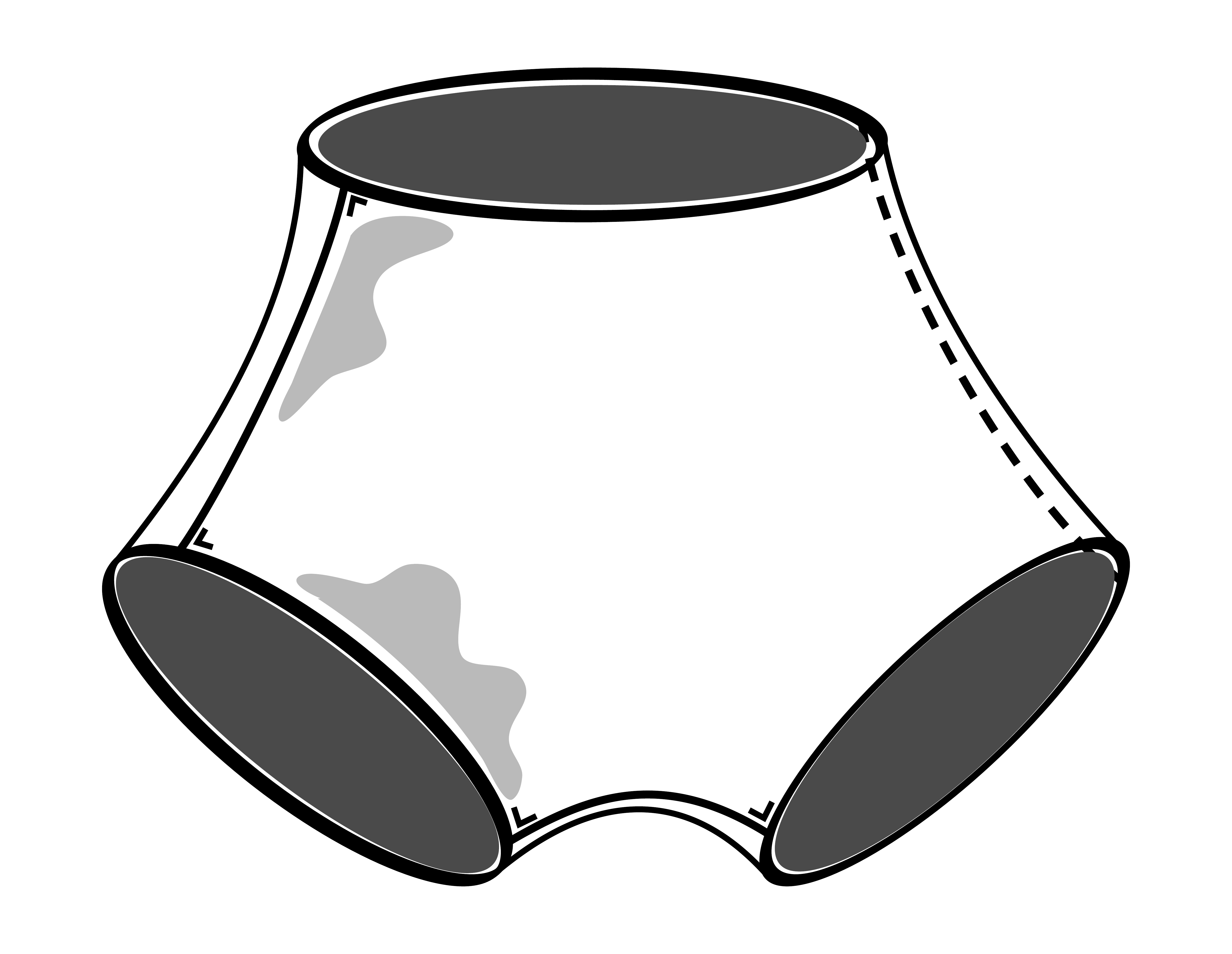,width=8cm,angle=0}}}
\vspace{-24pt}
\end{center}
\caption{A properly embedded pair of pants} \label{fig:pants}
\end{figure}

The function $\varphi$ applied to $P$ can now be expressed as: 
$$\varphi(P):=4\sum_{i\neq j}\bigg[2\mathcal{L}\bigg(\frac{1-x^2_i}{1-x^2_iy_j}\bigg)-2\mathcal{L}\bigg(\frac{1-y_j}{1-x^2_iy_j}\bigg)-\mathcal{L}(y_j)-\mathcal{L}\bigg(\frac{(1-y_j)^2x^2_i}{(1-x^2_i)^2y_j}\bigg)\bigg],$$
where $x_i=e^{-\ell_i/2}$ and $y_i=\tanh^2(m_i/2)$. Note that $x_i$ is monotonic decreasing in $\ell_i$.\\

One of our goals will be to study the variation of this function in terms of the lengths $\ell_i$. For that purpose, we shall express the function solely in terms of the $x_i$. By our definition of $y_1$:
$$y_1=\tanh^2(m_1/2)=\frac{\sinh^2(m_1/2)}{\cosh^2(m_1/2)}=\frac{(\cosh(m_1)-1)/2}{(\cosh(m_1)+1)/2}=\frac{\cosh(m_1)-1}{\cosh(m_1)+1}.$$ 
Using standard hyperbolic trigonometry we have
$$\cosh(m_1)=\frac{\cosh(\ell_1/2)+\cosh(\ell_2/2)\cosh(\ell_3/2)}{\sinh(\ell_2/2)\sinh(\ell_3/2)}=\frac{(x_1+\frac{1}{x_1})/2+(x_2+\frac{1}{x_2})(x_3+\frac{1}{x_3})/4}{(\frac{1}{x_2}-x_2)(\frac{1}{x_3}-x_3)/4}.$$
Then $$y_1=\frac{2(x_1+\frac{1}{x_1})+(x_2+\frac{1}{x_2})(x_3+\frac{1}{x_3})-(\frac{1}{x_2}-x_2)(\frac{1}{x_3}-x_3)}{2(x_1+\frac{1}{x_1})+(x_2+\frac{1}{x_2})(x_3+\frac{1}{x_3})+(\frac{1}{x_2}-x_2)(\frac{1}{x_3}-x_3)}=\frac{(x_1x_3+x_2)(x_1x_2+x_3)}{(x_2x_3+x_1)(1+x_1x_2x_3)}.$$
More generally, for $\{i,j,k\}=\{1,2,3\}$, we obtain:
$$y_j=\frac{(x_jx_k+x_i)(x_jx_i+x_k)}{(x_ix_k+x_j)(1+x_1x_2x_3)}.$$
Hence
$$\frac{1-x^2_i}{1-x^2_iy_j}=\frac{1-x^2_i}{1-x^2_i\frac{(x_jx_k+x_i)(x_jx_i+x_k)}{(x_ix_k+x_j)(1+x_1x_2x_3)}}=\frac{(x_ix_k+x_j)(1+x_1x_2x_3)}{x_j+x_i^2x_j+x_ix_k+x_ix_j^2x_k}.$$
Similarly:
$$\frac{1-y_j}{1-x^2_iy_j}=\frac{x_j(1-x^2_k)}{x_j+x_i^2x_j+x_ix_k+x_ix_j^2x_k},$$
and
$$\frac{(1-y_j)^2x^2_i}{(1-x^2_i)^2y_j}=\frac{x_i^2x_j^2(1-x_k^2)^2}{(x_k+x_ix_j)(x_i+x_jx_k)(x_j+x_ix_k)(1+x_1x_2x_3)}.$$
In terms of $x_1,x_2$ and $x_3$ we obtain: 
$$\varphi(x_1,x_2,x_3):=4\sum_{\{i,j,k\}=\{1,2,3\}}\bigg[2\mathcal{L}\bigg(\frac{(x_ix_k+x_j)(1+x_1x_2x_3)}{x_j+x_i^2x_j+x_ix_k+x_ix_j^2x_k}\bigg)$$
$$-2\mathcal{L}\bigg(\frac{x_j(1-x^2_k)}{x_j+x_i^2x_j+x_ix_k+x_ix_j^2x_k}\bigg)-
\mathcal{L}\bigg(\frac{(x_jx_k+x_i)(x_jx_i+x_k)}{(x_ix_k+x_j)(1+x_1x_2x_3)}\bigg)$$
$$-\mathcal{L}\bigg(\frac{x_i^2x_j^2(1-x_k^2)^2}{(x_k+x_ix_j)(x_i+x_jx_k)(x_j+x_ix_k)(1+x_1x_2x_3)}\bigg)\bigg].$$

\underline{{\it The function $\eta$ on improperly embedded pants}} 

Now let $T$ be a hyperbolic one-holed torus with boundary geodesic $\beta$ and let $\alpha$ be a non-peripheral simple closed geodesic of $T$. Let $h_\alpha$ be the length of the shortest simple orthogeodesic from $\beta$ to itself which is disjoint from $\alpha$. Let $p_\alpha$ denote the length of the pair of shortest simple orthogeodesics from $\alpha$ to $\beta$. Finally let $q_\alpha$ be 
the length of the shortest simple orthogeodesic from $\alpha$ to itself. 

\begin{figure}[h]
\leavevmode \SetLabels
\L(.49*1.01) $\beta$\\%
\L(.455*.15) $\alpha$\\%
\L(.495*.73) $h_\alpha$\\%
\L(.4*.56) $p_\alpha$\\
\L(.55*.33) $q_\alpha$\\
\endSetLabels
\begin{center}
\AffixLabels{\centerline{\epsfig{file =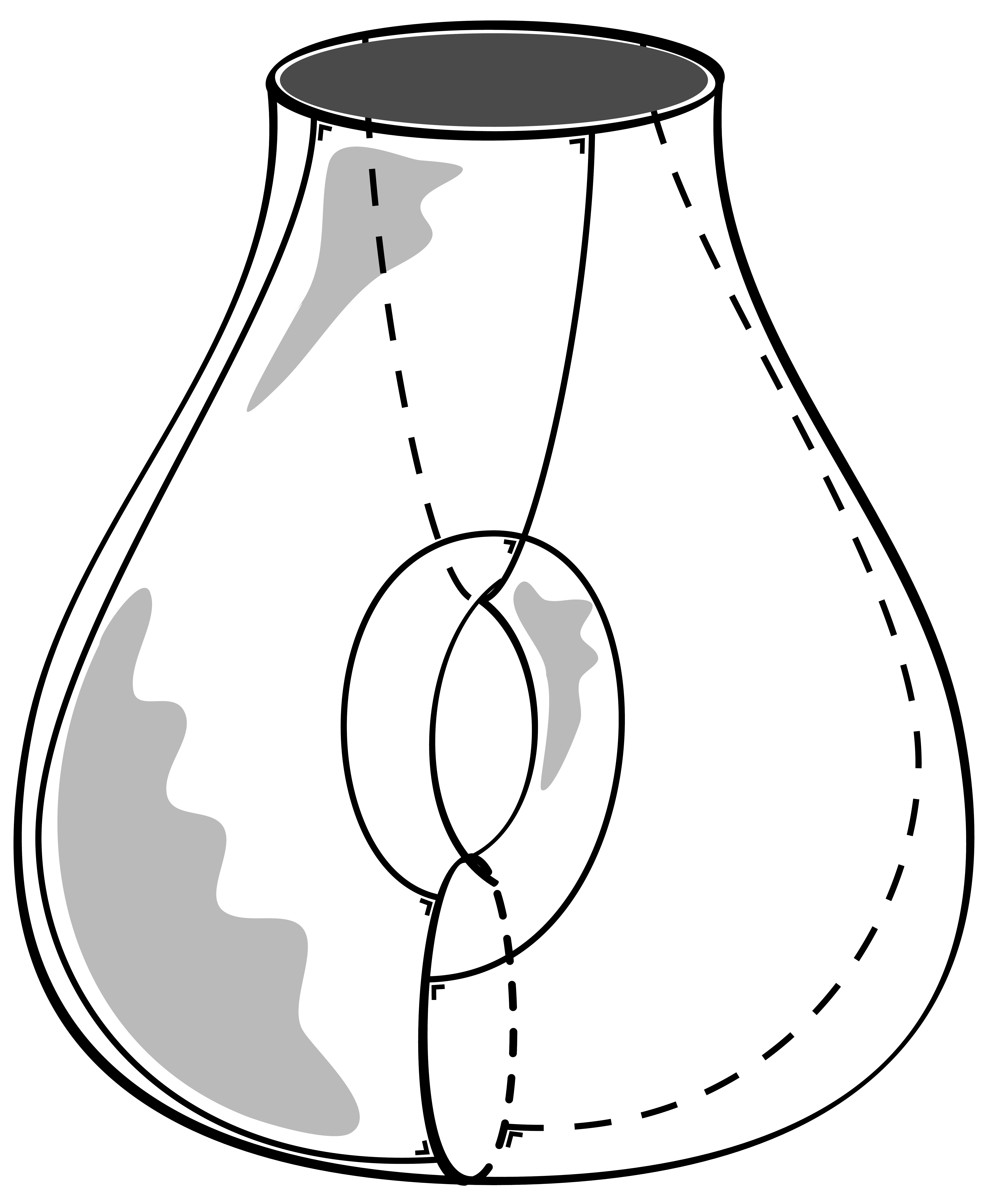,width=5cm,angle=0}}}
\vspace{-24pt}
\end{center}
\caption{An improperly embedded pair of pants} \label{fig:torus}
\end{figure}

Let $P$ be the improperly embedded pair of pants associated to $T$ by cutting $T$ along $\alpha$. Then the function $\eta$ is defined as:
$$\eta(P):=8\bigg[\mathcal{L}\bigg(\tanh^2\bigg(\frac{q_\alpha}{2}\bigg)\bigg)+2\mathcal{L}\bigg(\tanh^2\bigg(\frac{h_\alpha}{2}\bigg)\bigg)-\mathcal{L}\bigg(\sech^2\bigg(\frac{p_\alpha}{2}\bigg)\bigg)$$
$$-2\LA\bigg(e^{-\ell(\alpha)},\tanh^2\bigg(\frac{h_\alpha}{2}\bigg)\bigg)-2\LA\bigg(e^{-\frac{\ell(\beta)}{2}},\tanh^2\bigg(\frac{h_\alpha}{2}\bigg)\bigg)\bigg].$$
 Let $x:=e^{\frac{-\ell(\beta)}{2}}$ and $y:=e^{\frac{-\ell(\alpha)}{2}}$, we will express each term of $\eta(P)$ in term of $x$ and $y$.
 $$\tanh^2\bigg(\frac{q_\alpha}{2}\bigg)=\frac{\cosh(q_\alpha)-1}{\cosh(q_\alpha)+1}=\frac{\cosh(\ell(\beta)/2)+\cosh^2(\ell(\alpha)/2)-\sinh^2(\ell(\alpha)/2)}{\cosh(\ell(\beta)/2)+\cosh^2(\ell(\alpha)/2)+\sinh^2(\ell(\alpha)/2)}$$
 $$=\frac{x+1/x+2}{x+1/x+y^2+1/y^2}
 =\frac{(x+1)^2y^2}{(x+y^2)(x y^2+1)}.$$
 Similarly, $$\tanh^2\bigg(\frac{h_\alpha}{2}\bigg)=\frac{x+y^2}{xy^2+1}.$$
 and $$\sech^2\bigg(\frac{p_\alpha}{2}\bigg)=\frac{1}{\cosh^2\left(\frac{p_\alpha}{2}\right)}=\frac{1}{\sinh^2(\ell(\alpha)/2)\sinh^2(h_\alpha)}=\frac{(1-\tanh^2(\frac{h_\alpha}{2}))^2y^2}{(1-y^2)^2\tanh^2(\frac{h_\alpha}{2})}$$
 $$=\frac{(1-x)^2y^2}{(x+y^2)(x y^2+1)}.$$
 
Following \cite{Luo-Tan}, we define the lasso function $\LA$ as follows:
 $$\LA(a,b):=\mathcal{L}(b)+\mathcal{L}\bigg(\frac{1-b}{1-ab}\bigg)-\mathcal{L}\bigg(\frac{1-a}{1-ab}\bigg),$$ for $a,b \in (0,1)$.
 Hence $$\LA\bigg(e^{-\ell(\alpha)},\tanh^2\bigg(\frac{h_\alpha}{2}\bigg)\bigg)=\LA\bigg(y^2,\frac{x+y^2}{xy^2+1}\bigg)=\mathcal{L}\bigg(\frac{x+y^2}{xy^2+1}\bigg)+\mathcal{L}\bigg(\frac{1-x}{1+y^2}\bigg)-\mathcal{L}\bigg(\frac{xy^2+1}{y^2+1}\bigg),$$ and 
 $$\LA\bigg(e^{-\frac{\ell(\beta)}{2}},\tanh^2\bigg(\frac{h_\alpha}{2}\bigg)\bigg)=\LA\bigg(x,\frac{x+y^2}{xy^2+1}\bigg)\bigg)=\mathcal{L}\bigg(\frac{x+y^2}{xy^2+1}\bigg)+\mathcal{L}\bigg(\frac{1-y^2}{1+x}\bigg)-\mathcal{L}\bigg(\frac{xy^2+1}{x+1}\bigg).$$
 In terms of $x$ and $y$ we obtain:
 $$\eta(x,y)=8\mathcal{L}\bigg(\frac{(x+1)^2y^2}{(x+y^2)(xy^2+1)}\bigg)-8\mathcal{L}\bigg(\frac{(1-x)^2y^2}{(x+y^2)(xy^2+1)}\bigg)-16\mathcal{L}\bigg(\frac{1-x}{1+y^2}\bigg)$$
 $$+16\mathcal{L}\bigg(\frac{xy^2+1}{1+y^2}\bigg)-16\mathcal{L}\bigg(\frac{x+y^2}{xy^2+1}\bigg)-16\mathcal{L}\bigg(\frac{1-y^2}{1+x}\bigg)+16\mathcal{L}\bigg(\frac{xy^2+1}{1+x}\bigg).$$

Now that we have properly defined the functions $\varphi$ and $\eta$, we refer the reader back to our main result as stated in the introduction (Theorem \ref{thm:main}). Recall that if we express $\varphi(P)$ and $\eta(P)$ in terms of the boundary lengths, this theorem tells us that the functions are strictly {\it decreasing} in terms of these lengths. We defer the proof of Theorem \ref{thm:main} to the final section, and now concentrate on certain of its implications. 

\section{Dominating simple length spectra}

In this section we show how to deduce a theorem of Thurston's (Theorem 3.1 in \cite{Thurston}) from the monotonicity properties of the measures.

\begin{thm} If $X,Y \in \TT_{g}$ satisfy $\ell_X(\gamma) \geq \ell_Y(\gamma)$ for all simple closed geodesics $\gamma$, then $X=Y$. 
\end{thm}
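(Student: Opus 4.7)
The strategy is to apply the Hu-Tan identity \eqref{eq2} to both $X$ and $Y$ simultaneously and exploit the strict monotonicity of $\varphi$ and $\eta$ from Theorem \ref{thm:main}. Since the identity evaluates to the topological invariant $8\pi^2(g-1)$ and the index sets $\mathcal{P}$ and $\mathcal{I}$ are defined up to isotopy, the common marking of $X$ and $Y$ canonically identifies these sets for the two surfaces, so one can compare \eqref{eq2} term by term. Every boundary of every $P\in\mathcal{P}\cup\mathcal{I}$ is (isotopic to) a simple closed geodesic: in the improperly embedded case the boundaries of $P$ are $\beta$ together with two copies of the simple closed geodesic $\alpha$ along which the enclosing one-holed torus is cut, so both $\ell(\alpha)$ and $\ell(\beta)$ are accessible to the hypothesis.

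Expressed in the parameters $x=e^{-\ell/2}$ used in Theorem \ref{thm:main}, the assumption $\ell_X(\gamma)\geq \ell_Y(\gamma)$ becomes $x^X\leq x^Y$ componentwise. The strict monotonicity of $\varphi$ on $(0,1]^3$ and $\eta$ on $(0,1]^2$ then yields
$$\varphi_X(P)\leq \varphi_Y(P),\qquad \eta_X(P)\leq \eta_Y(P)$$
for every $P$, with strict inequality whenever any boundary length of $P$ is strictly larger on $X$. Summing and applying \eqref{eq2} to both surfaces gives
$$8\pi^2(g-1)=\sum_{P\in\mathcal{P}}\varphi_X(P)+\sum_{P\in\mathcal{I}}\eta_X(P)\leq \sum_{P\in\mathcal{P}}\varphi_Y(P)+\sum_{P\in\mathcal{I}}\eta_Y(P)=8\pi^2(g-1),$$
so all termwise inequalities must actually be equalities. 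Strict monotonicity then forces $\ell_X(\gamma)=\ell_Y(\gamma)$ for every simple closed geodesic $\gamma$ that bounds some element of $\mathcal{P}\cup\mathcal{I}$.

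To conclude, any simple closed curve on a closed surface of genus $g\geq 2$ extends to a pants decomposition and hence arises as a boundary of some $P\in\mathcal{P}$. Consequently $X$ and $Y$ have identical marked simple length spectra, which classically pins down a unique point in $\TT_g$ (the lengths in any pants decomposition together with the lengths of curves dual to those decomposing curves already determine Fenchel-Nielsen coordinates), giving $X=Y$. The one technical point to verify is that the termwise comparison of potentially infinite series is legitimate; this is immediate because all summands are nonnegative and both sums converge to the same finite value, so componentwise domination promotes to componentwise equality. The genuine ingredient, once Theorem \ref{thm:main} is in hand, is the strict monotonicity; the rest of the deduction is essentially formal.
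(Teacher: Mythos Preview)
Your argument is essentially the paper's: apply the Hu--Tan identity \eqref{eq2} to both $X$ and $Y$, use the strict monotonicity of $\varphi$ and $\eta$ from Theorem \ref{thm:main} to get termwise inequalities, force equality in every term since both sides sum to $8\pi^2(g-1)$, and conclude by rigidity of the marked simple length spectrum.

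One small slip: you assert that every simple closed curve arises as a boundary of some $P\in\mathcal{P}$ (properly embedded). In genus $2$ the separating curve bounds two one-holed tori, and any pair of pants containing it is improperly embedded, so it never appears as a boundary in $\mathcal{P}$. This does not damage your argument, since you already established equality of lengths for all boundaries of pants in $\mathcal{P}\cup\mathcal{I}$; just replace ``some $P\in\mathcal{P}$'' by ``some $P\in\mathcal{P}\cup\mathcal{I}$'' (which is how the paper phrases it).
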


\begin{proof}
Just for the purpose of this proof we think of the functions $\varphi$ and $\eta$ as being functions of the boundary lengths. In order not to introduce too much notation, we continue to call them $\varphi$ and $\eta$.

Now if $\ell_X(\gamma) \geq \ell_Y(\gamma)$ for all $\gamma$, then in particular, by monotonicity of the function $\varphi$, for any embedded pair of pants with boundary curves $\gamma_1,\gamma_2$ and $\gamma_3$:
$$
\varphi(\ell_X(\gamma_1), \ell_X(\gamma_2), \ell_X(\gamma_3)) \leq \varphi(\ell_Y(\gamma_1), \ell_Y(\gamma_2), \ell_Y(\gamma_3))
$$
with equality if and only if the lengths are all equal. Similarly, for an improperly embedded pair of pants with boundary curve $\beta$ and interior simple closed curve $\alpha$, we have, by monotonicity of the function $\eta$:
$$
\eta(\ell_X(\beta), \ell_X(\alpha))\leq \eta(\ell_Y(\beta), \ell_Y(\alpha))
$$
with equality if and only if the lengths are equal. As, by Equation \ref{eq2}, the sums of these functions, summed over all possible properly and improperly embedded pants, are equal for both $X$ and $Y$, it follows that each summand is equal.

Now as every simple closed geodesic belongs to certain pairs of pants, either properly or improperly embedded, the result follows by rigidity of the marked simple length spectrum.
\end{proof}

\section{Counting pants}

We now focus on counting the number of pants (embedded or improperly embedded) of boundary length less than $L$ on any surface (of genus $g$ with $n$ cusps). 

Fix a hyperbolic closed surface $X$ and let $\mathcal{Y}(X)$ be the set of isotopy classes of geodesic pants on $X$ (so the union of properly $\mathcal{P}(X)$ and improperly embedded pants $\mathcal{I}(X)$). The Hu-Tan variation of the identity (Equation \ref{eq2}) allows us to associate to each pair of pants $P$ a measure (either $\varphi$ or $\eta$ depending on whether it is properly or improperly embedded), the sum of which adds up to the volume of the unit tangent bundle. The inequalities from Theorem \ref{thm:main} will then allow us  to bound the number of pants.

\begin{thm}\label{thm:pants}
For a surface $X$ we set
$${\mathrm{NP}}_X(L) = \sharp \left\{Y \in \mathcal{Y}(X) | \ell(\partial{Y}) \leq L\right\}$$
to be the number of pants of total boundary length less than $L$. Then any $X\in \MM_g$ satisfies
$$\mathrm{NP}_X(L) < \frac{2\pi^2(g-1)e^{L/2}}{L+6}.$$
\end{thm}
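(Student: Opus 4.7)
The plan is to treat the Hu--Tan identity (\ref{eq2}) as a weighted sum whose total is $8\pi^2(g-1)$, and to extract from Theorem~\ref{thm:main} a uniform lower bound on the weight of each pants in terms of its total boundary length. Summing over those pants whose boundary length is at most $L$ will then convert the identity into an upper bound on $\mathrm{NP}_X(L)$.

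The first step is to rewrite the lower bound from Theorem~\ref{thm:main} in the length variable. For a properly embedded pants $P$ with $L_P := \ell(\partial P) = \ell_1 + \ell_2 + \ell_3$, the variable $t = \sqrt[3]{x_1 x_2 x_3}$ satisfies $t^3 = e^{-L_P/2}$ and $\log(t) = -L_P/6$, so Theorem~\ref{thm:main} gives
$$
\varphi(P) \;>\; -24 t^3 \log(t) + 24 t^3 \;=\; 4 L_P e^{-L_P/2} + 24 e^{-L_P/2} \;=\; 4(L_P+6)\, e^{-L_P/2}.
$$
For an improperly embedded pants (with boundary lengths $\ell(\beta), \ell(\alpha), \ell(\alpha)$), we have $L_P = \ell(\beta) + 2 \ell(\alpha)$, and the analogous parameter $t = \sqrt[3]{xy^2}$ satisfies $t^3 = e^{-L_P/2}$. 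Combining the two inequalities $\eta(x,y) \geq \varphi(x,y,y) \geq \varphi(t,t,t)$ of Theorem~\ref{thm:main} yields the same bound $\eta(P) > 4(L_P+6)\, e^{-L_P/2}$.

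The second step is to verify that $f(L) := 4(L+6) e^{-L/2}$ is strictly decreasing on $[0,\infty)$, which is immediate from $f'(L) = -2(L+4) e^{-L/2} < 0$. Consequently, for every pants $P \in \mathcal{Y}(X)$ with $L_P \leq L$, its assigned measure is strictly greater than $f(L)$. Assuming $\mathrm{NP}_X(L) \geq 1$ (else the inequality is trivial for $g \geq 2$) and restricting the identity (\ref{eq2}) to these pants,
$$
\mathrm{NP}_X(L) \cdot 4(L+6)\, e^{-L/2} \;<\; \sum_{P \in \mathcal{Y}(X)} \bigl(\varphi \text{ or } \eta\bigr)(P) \;=\; 8\pi^2(g-1),
$$
and rearranging gives $\mathrm{NP}_X(L) < \dfrac{2 \pi^2 (g-1)\, e^{L/2}}{L+6}$.

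There is no real obstacle once Theorem~\ref{thm:main} is granted: the argument is just the substitution $t^3 = e^{-L_P/2}$ and a one-line monotonicity check on $f$. The only care required is to transport the strict inequality of Theorem~\ref{thm:main} through to the conclusion, which is why the case $\mathrm{NP}_X(L) = 0$ is treated as a (trivial) separate side case.
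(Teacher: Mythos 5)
Your proposal is correct and follows essentially the same approach as the paper: translate the pointwise lower bound $\varphi(t,t,t) > -24t^3\log t + 24 t^3$ into the length variable via $t^3 = e^{-L_P/2}$, apply $\eta(x,y) \geq \varphi(x,y,y) \geq \varphi(t,t,t)$ for the improperly embedded case, and divide the Hu--Tan identity by the resulting uniform lower bound. The one point you add over the paper's write-up is the explicit check that $f(L) = 4(L+6)e^{-L/2}$ is decreasing (so that pants of total length strictly less than $L$ also have measure exceeding $f(L)$); this step is indeed needed and is used only implicitly in the paper, so making it explicit is a small improvement in rigor rather than a different route.
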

\begin{proof}
Let $Y \in \mathcal{Y}(X)$. Suppose $Y$ is properly embedded, and let $\ell_1, \ell_2$ and $\ell_3$ be its boundary lengths and $L$ their sum. Notice that
$$
e^{-L/6} = \sqrt[3]{e^{-\frac{\ell_1}{2}}e^{-\frac{\ell_2}{2}}e^{-\frac{\ell_3}{2}}}
$$
and thus by Theorem \ref{thm:main} we have
$$
\varphi(Y) = \varphi\left(e^{-\ell_1/2},e^{-\ell_2/2},e^{-\ell_3/2}\right) \geq \varphi\left(e^{-\frac{L}{6}},e^{-\frac{L}{6}},e^{-\frac{L}{6}}\right).
$$
Similarly, if $Y$ is improperly embedded with boundary lengths $\ell_1, \ell_2$ and $\ell_2$, and $L$  their sum, we have
$$
\eta(Y) = \eta\left(e^{-\ell_1/2},e^{-\ell_2/2}\right) \geq \varphi\left(e^{-\frac{L}{6}},e^{-\frac{L}{6}},e^{-\frac{L}{6}}\right)
$$
where the last inequality is again from Theorem \ref{thm:main}. Now by Theorem \ref{thm:main} again, the measure associated to any $Y$ of total boundary length $L$ is greater than
$$
\varphi\left(e^{-\frac{L}{6}},e^{-\frac{L}{6}},e^{-\frac{L}{6}}\right) > 24\, e^{-L/2} \frac{L}{6} + 24 \, e^{-L/2}= 4\frac{L+6}{e^{L/2}}.
$$
Now as the total sum of all the measures is equal to $8 \pi^2 (g-1)$, we have
$$
\mathrm{NP}_X(L) < 8 \pi^2 (g-1) \frac{1}{4\frac{L+6}{e^{-L/2}}}= \frac{2\pi^2(g-1)e^{L/2}}{L+6}
$$
as desired.
\end{proof}

\section{Behavior of the measures}

This is the main technical part of the paper, where we show the measures satisfy the properties we previously claimed. Several of the intermediate claims, although they are ultimately purely calculus, are in fact quite technical. They can (and have been) checked by formal computational software. 

The following lemma is part of Theorem \ref{thm:main}.

\begin{lem}\label{a}
$\varphi(x,x,x) > -24x^3\log(x)+24x^3$ for all $x\in (0,1]$.
 \end{lem}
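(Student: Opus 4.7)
My plan is to substitute $x_1=x_2=x_3=x$ into the explicit formula derived earlier for $\varphi$. By the permutation symmetry of the summand, all six terms of the symmetric sum coincide, so
$$\varphi(x,x,x)=24\left[2\mathcal{L}(A)-2\mathcal{L}(B)-\mathcal{L}(C)-\mathcal{L}(D)\right]$$
for four specific rational functions $A,B,C,D$ of $x$. To make the behavior near $0$ manageable, I would first apply Abel's identity $\mathcal{L}(u)+\mathcal{L}(1-u)=\pi^2/6$ to the two arguments that tend to $1$ (namely $\frac{1+x^3}{1+x^2}$ and $\frac{1-x}{1+x^2}$), replacing them, up to a net cancellation of constants, by $\frac{x^2(1-x)}{1+x^2}$ and $\frac{x(1+x)}{1+x^2}$ respectively. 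After this rewrite all four dilogarithm arguments vanish at $x=0$.

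Next, I would define the difference $H(x):=\varphi(x,x,x)-24x^3(1-\log x)$, so that the lemma reduces to proving $H(x)>0$ on $(0,1]$. Using the series $\mathcal{L}(u)=\sum_{n\ge 1}u^n/n^2-\frac{\log u}{2}\sum_{n\ge 1}u^n/n$, each dilogarithm expands as a power series in $x$ with coefficients in $\mathbb{R}[\log x]$. A careful bookkeeping shows that the contributions at orders $x^k$ and $x^k\log x$ cancel for $k=1,2$, while the $x^3$ and $x^3\log x$ coefficients match precisely those of $24x^3(1-\log x)$. In particular $H(0)=0$, with $H$ vanishing to higher order at the origin.

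The remaining task---and the main obstacle---is to propagate positivity from this infinitesimal statement to all of $(0,1]$. I would attempt this by proving $H'(x)>0$ on $(0,1)$ and then applying the fundamental theorem of calculus. Differentiating each dilogarithm using $\mathcal{L}'(u)=-\frac{1}{2}\big(\frac{\log(1-u)}{u}+\frac{\log u}{1-u}\big)$ and the chain rule, and subtracting the derivative $24x^2(2-3\log x)$ of the comparison function, produces an explicit combination of rational functions and logarithms in $x$. Verifying its positivity on $(0,1]$ reduces, after clearing denominators, to an elementary but bulky inequality of the kind that---as the authors note at the start of this section---is best handled with symbolic-computation support. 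Once $H'(x)>0$ is established, $H(x)=\int_0^x H'(t)\,dt>0$ for $x\in(0,1]$ completes the proof.
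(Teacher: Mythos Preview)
Your overall plan coincides with the paper's: define $H(x)=\varphi(x,x,x)-24x^3(1-\log x)$ (this is exactly the paper's $f$), use $H(0^+)=0$, and reduce the lemma to $H'(x)>0$ on $(0,1)$. However, your proposal stops short at the decisive step: you do not actually prove $H'(x)>0$, deferring it instead to ``symbolic-computation support.'' That inequality is the entire substantive content of the lemma, and the paper supplies a complete, elementary argument for it.

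Concretely, after writing $f'(x)$ as a linear combination of logarithms with explicit rational-function coefficients, the paper applies the elementary bound $\log(t)\le (t-1)-\tfrac12(t-1)^2$ for $t\in(0,1]$ to every logarithmic term except $\log(1-x)$. This isolates a remainder of the form $\dfrac{24(1-x)}{1-x+x^2}\,g(x)$ with
\[
g(x)=\frac{x\,(2-x+3x^2-4x^3+9x^4-9x^5+2x^6)}{2(1+x)(1-x)(1-x+x^2)}+\log(1-x),
\]
and then shows $g(0)=0$ and $g'(x)>0$ by exhibiting the numerator of $g'$ as a manifestly positive polynomial on $(0,1)$. This chain of reductions is the proof; your outline omits it entirely.

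Two further remarks. First, your Abel-identity rewrite and the order-by-order series bookkeeping are unnecessary detours: once $H'>0$ is in hand, the boundary input needed is only $\lim_{x\to 0}H(x)=0$, which the paper observes directly. Second, invoking symbolic computation to ``handle'' an inequality is not a proof unless accompanied by a verifiable certificate; the paper's argument is self-contained and checkable by hand.
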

 \begin{proof}
 Consider the function $f(x):=\varphi(x,x,x) + 24x^3\log(x)-24x^3$. This function is continuous on $(0,1]$, so it is enough to prove that $f$ is strictly increasing on $(0,1)$. Indeed, after taking the derivative of $f$ and manipulating terms of $f'$ reasonably, we obtain:
 $$f'(x)=(\varphi(x,x,x))' -48 x^2 + 72 x^2 \log(x)=m\Bigg(a \log(1 - x) + 
 b \log(x) 
 + c \log(1 + x^3) $$
 $$ 
 -d \log\left(1 - x + x^2\right) + 
 h \log\left(\frac{1 - x + x^2}{1 - x}\right)-2(1 + x^3) (1 - x) x^3\Bigg),$$
 where $$m:=\frac{24}{(1 - x) x (1 + x^3)},\,\,\,\, a:=x (1 + x) (1 - x)^2,\,\,\,\, b:=3 (1 - x) x^6 ,\,\,\,\, c:=(1 + x^3) (1 - x),$$
 $$ d:=x (1 + x),\,\,\,\, h:=x^2 (1 - x^2) .$$
Note that the following Taylor series for $\log(t)$ around $1$ is valid for $t \in (0,2]$: $$\log(t)=(t-1)-\frac{(t-1)^2}{2}+\frac{(t-1)^3}{3}-...=\sum_{k=1}^{\infty}\frac{(-1)^{k-1}(t-1)^k}{k}.$$
We observe that for all $t\in(0,1]$, the terms of the Taylor series are negative which implies that:
$$\log(t)\leq (t-1)-\frac{(t-1)^2}{2}.$$
Therefore:
$$f'(x)\geq m\Bigg(a \log(1 - x) + 
 b \left(1-\frac{1}{x}\right) 
 + c \left(1-\frac{1}{1 + x^3}\right) $$
 $$ -d \left((1 - x + x^2-1)-\frac{(1 - x + x^2-1)^2}{2}\right)+ h \left(1-\frac{1 - x}{1 - x + x^2}\right)-2(1 + x^3) (1- x)x^3\Bigg).$$
 Simplifying the right hand side of the above inequality we get:
 $$f'(x)\geq \frac{12 x (2 - x + 3 x^2 - 4 x^3 + 9 x^4 - 9 x^5 + 2 x^6)}{(1 + x) (1 - 
 x + x^2)^2} + \frac{24 (1 - x) \log(1 - x)}{1 - x + x^2}$$
 $$= \frac{24 (1 - x)}{1 - x + x^2}\left(\frac{x (2 - x + 3 x^2 - 4 x^3 + 9 x^4 - 9 x^5 + 2 x^6)}{2(1 + x) (1-x)(1 - 
 x + x^2)} + \log(1 - x)\right).$$
We now set $$g(x):=\frac{ x (2 - x + 3 x^2 - 4 x^3 + 9 x^4 - 9 x^5 + 2 x^6)}{2(1 + x) (1-x)(1 - x + x^2)} + \log(1 - x)$$
and so
$$g'(x)=\frac{x^2 (5 - 15 x + 34 x^2 - 46 x^3 + 28 x^4 + 4 x^5 - 18 x^6 + 13 x^7 - 
 3 x^8)}{(1 - x)^2 (1 + x)^2 (1 - x + x^2)^2}$$
 $$=\frac{x^2 (2 x^8 + 10 x^7 (1 - x) + (1 - x)^2 (5 - 5 x + 19 x^2 - 3 x^3 + 
 3 x^4 + 13 x^5 + 5 x^6))}{(1 - x)^2 (1 + x)^2 (1 - x + x^2)^2}>0,$$ for all $x\in(0,1)$. Therefore, $g(x)> g(0)=0$, for all $x \in (0,1).$

In particular, $f'(x)> 0$, for all $x \in (0,1)$ and thus
$$f(x) > \lim_{x\to0} \left(\varphi(x,x,x)+24x^3\log(x)-24x^3\right) = 0$$
which completes the proof.
 \end{proof}

We now  prove the monotonicity of $\varphi$. Since $\varphi$ is a symmetric function, it suffices to show:
\begin{lem} \label{d}
$$\partial_{x_1}{\varphi}(x_1,x_2,x_3)>0$$ for all $x_1,x_2,x_3 \in (0,1)$.
\end{lem}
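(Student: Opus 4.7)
The strategy is the natural three-variable analogue of Lemma \ref{a}: differentiate directly, expand using the Rogers dilogarithm identity $\mathcal{L}'(t) = -\frac{1}{2}\bigl[\frac{\log(1-t)}{t} + \frac{\log t}{1-t}\bigr]$, and then bound every logarithm by its two-term Taylor approximation to reduce the problem to a polynomial inequality on the cube $(0,1)^3$.

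First I would apply the chain rule to each of the six summands of $\varphi$ (one for each permutation of $\{1,2,3\}$). Each summand is a linear combination of four Rogers dilogarithms whose arguments are rational functions of $x_1,x_2,x_3$, and upon differentiation each becomes a sum of terms of the form $R(x)\log Q(x)$ with $Q$ drawn from a short list of polynomials: $1\pm x_i$, $x_jx_k + x_i$, $1+x_1x_2x_3$, $x_j + x_i^2x_j+x_ix_k+x_ix_j^2x_k$, and so on. After collecting like logarithms and simplifying rational coefficients (the bookkeeping is heavy but mechanical), one expects an expression of the form
$$\partial_{x_1}\varphi(x_1,x_2,x_3) \;=\; M(x_1,x_2,x_3)\Bigl(\sum_k R_k(x)\log F_k(x) + S(x)\Bigr)$$
with a manifestly positive prefactor $M$.

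The next step is to replace each logarithm using the Taylor inequalities already deployed in Lemma \ref{a}: $\log t \le (t-1) - (t-1)^2/2$ for $t\in(0,1]$ when the coefficient $R_k$ is negative, and $\log t \ge 1-1/t$ when $R_k$ is positive, with the dual inequalities used for those few arguments (such as $1+x_1x_2x_3$) which exceed $1$. This converts the transcendental inequality $\partial_{x_1}\varphi > 0$ into a rational inequality on $(0,1)^3$. Clearing all (positive) denominators leaves a polynomial positivity statement $P(x_1,x_2,x_3) > 0$, which I would attempt to establish by exhibiting a sum-of-positive-products decomposition in the monomials $x_i$ and $1-x_i$, analogous to the final factorization in Lemma \ref{a} where the key numerator was written as $2x^8 + 10x^7(1-x) + (1-x)^2(\cdots)$ with the inner factor having non-negative coefficients; if such a decomposition proves elusive, one can instead treat $P$ as a polynomial in $x_1$ with coefficients in $\mathbb{R}[x_2,x_3]$ and examine its real roots via Sturm sequences or a CAD routine.

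The principal obstacle is not conceptual but combinatorial: three variables and six symmetric summands produce intermediate expressions unmanageable by hand, so a computer algebra system is indispensable. The one step requiring mathematical judgment rather than symbolic muscle is choosing which Taylor bound to apply to which logarithm. A careless choice yields a valid but hopelessly weak inequality; the right choice is dictated by the sign of each rational coefficient $R_k$ throughout $(0,1)^3$, and once that sign pattern is determined the remaining polynomial positivity check is routine.
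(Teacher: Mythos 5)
Your high-level plan --- differentiate, expand via the Rogers derivative, replace logarithms by rational bounds chosen according to the sign of each coefficient, and finish with an algebraic positivity check --- is the right neighbourhood, and the paper's proof does begin this way: it writes
$$\partial_{x_1}\varphi = \sum_i a_i\log x_i + \sum_i b_i\log(1-x_i^2) + \sum_i c_i\log(x_i+x_jx_k) + M\log(1+x_1x_2x_3)$$
and then bounds several of the logs by $\log t \ge 1-1/t$ and $\log(1+u)\ge u/(1+u)$. But your prescription ``choose the Taylor bound dictated by the sign of each coefficient $R_k$ throughout $(0,1)^3$'' founders on the coefficient $M$ of $\log(1+x_1x_2x_3)$: $M$ is \emph{not} of constant sign on the cube. (It is positive for $x_1=x_2=x_3$ small, where the numerator inside the minus sign is $\sim -8x^3$, and negative near $(1,1,1)$, where that numerator is about $128$.) So there is no single one-sided bound you can legally apply to this term, and a blind ``bound every log and clear denominators'' pass does not produce a well-defined polynomial target, let alone a provably positive one.

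The paper resolves exactly this by a grouping step rather than a Taylor step: after regrouping, the troublesome term appears as
$$\frac{16x_1}{1-x_1^2}\log\!\left(1+\tfrac{x_2x_3}{x_1}\right) + M\log(1+x_1x_2x_3),$$
and since $\tfrac{x_2x_3}{x_1} \ge x_1x_2x_3$ for $x_1\in(0,1)$ one may replace the first logarithm by the smaller $\log(1+x_1x_2x_3)$, after which the combined coefficient $\tfrac{16x_1}{1-x_1^2}+M$ simplifies to a manifestly positive rational function, and the whole group is nonnegative. The remaining pieces then reduce, after the elementary bounds you mention, to $\tfrac{8x_2x_3}{1+x_1x_2x_3}\log(\tfrac{x_1}{x_2x_3}+1)+b_1\log(1-x_1^2)$, which is a sum of two visibly positive terms --- the paper never reduces to a single polynomial inequality, and does not need Sturm sequences or CAD. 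The missing idea in your proposal is therefore this grouping/monotonicity manoeuvre for the $M$ term; without it the sign obstruction is fatal, and with it the ``SOS decomposition'' fallback you propose becomes unnecessary.
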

\begin{proof} Taking the partial derivative of $\varphi$ with respect to the variable $x_1$, and by standard simplifications, one obtains:
$${\partial_{x_1}\varphi}(x_1,x_2,x_3)=\sum^3_{i=1}\bigg[a_i\log(x_i)+b_i\log(1-x_i^2)\bigg]+c_1\log(x_1+x_2x_3)$$
$$+c_2\log(x_2+x_1x_3)+c_3\log(x_3+x_1x_2) 
+M\log(1+x_1x_2x_3),$$
where
$$a_1:=-\frac{16(x_1+x_2x_3)}{(1-x_1^2)(1+x_1x_2x_3)}, \,\,\,\, a_2=a_3:=-\frac{16x_2x_3}{1+x_1x_2x_3},$$ $$b_1:=-\frac{8x_2x_3(1+x_1^2+2x_1x_2x_3)}{x_1(x_1+x_2x_3)(1+x_1x_2x_3)},$$
$$b_2:=\frac{8x_3(1-x_2^2)}{(x_2+x_1x_3)(1+x_1x_2x_3)}, \,\,\,\,b_3:=\frac{8x_2(1-x_3^2)}{(x_3+x_1x_2)(1+x_1x_2x_3)},$$
$$c_1:=\frac{16x_1+8x_2x_3+8x_1^2x_2x_3}{(1-x_1^2)(1+x_1x_2x_3)}, \,\,\,\,c_2=c_3:=\frac{8x_2x_3}{1+x_1x_2x_3},$$
{\footnotesize{$$M:=-\frac{16x_1^4(x_2^2+x_3^2+x_2^2x_3^2)+8x_1^3x_2x_3(4+x_1^2+3x_2^2+3x_3^2)+8x_2^2x_3^2(4x_1^2-2)-8x_1x_2x_3(1+x_2^2+x_3^2)}{x_1(1-x_1^2)(x_1+x_2x_3)(x_2+x_1x_3)(x_3+x_1x_2)}.$$\\}}
Again, by some standard manipulations, we can express $\partial_{x_1}{\varphi}(x_1,x_2,x_3)$ as follows:
$$\partial_{x_1}{\varphi}(x_1,x_2,x_3)=\frac{8x_2x_3}{1+x_1x_2x_3}\log\bigg(\bigg(\frac{x_1}{x_2x_3}+1\bigg)\bigg(\frac{x_2}{x_1x_3}+1\bigg)\bigg(\frac{x_3}{x_1x_2}+1\bigg)\bigg)
+b_1\log(1-x_1^2)$$
$$+b_2\log(1-x_2^2)+b_3\log(1-x^2_3)+\frac{16x_1}{1-x_1^2}\log\bigg(1+\frac{x_2x_3}{x_1}\bigg)+M\log(1+x_1x_2x_3).$$

Note that for all $x_1,x_2, x_3 \in (0,1)$, $$b_2\log(1-x_2^2)+b_3\log(1-x_3^2)\geq b_2.\frac{-x_2^2}{1-x_2^2}+b_3.\frac{-x_3^2}{1-x_3^2}=\frac{-8x_2x_3}{1+x_1x_2x_3}.\bigg(\frac{x_2}{x_2+x_1x_3}+\frac{x_3}{x_3+x_1x_2}\bigg)$$ and
$$\frac{16x_1}{1-x_1^2}\log\bigg(1+\frac{x_2x_3}{x_1}\bigg)+M\log(1+x_1x_2x_3)\geq \frac{16x_1}{1-x_1^2}\log(1+x_1x_2x_3)+M\log(1+x_1x_2x_3)$$
$$=\frac{8x_2x_3(2x_2x_3+x_1x_2^2+x_1x_3^2+x_1-x_1^3)}{x_1(x_1+x_2x_3)(x_3+x_1x_3)(x_3+x_1x_2)}
\log(1+x_1x_2x_3)> 0.$$
Hence, $${\partial_{x_1}\varphi}(x_1,x_2,x_3)> \frac{8x_2x_3}{1+x_1x_2x_3}\log\bigg(\bigg(\frac{x_1}{x_2x_3}+1\bigg)\bigg(\frac{x_2}{x_1x_3}+1\bigg)\bigg(\frac{x_3}{x_1x_2}+1\bigg)\bigg)$$
$$\,\,\,\,\,\,\,\,\,\,\,\,\,\,\,\,\,\,\,\,\,\,\,\,\,\,\,\,\,\,\,\,\,\,\,\,\,\,\,\,\,\,\,\,\,-\frac{8x_2x_3}{1+x_1x_2x_3}.\bigg(\frac{x_2}{x_2+x_1x_3}+\frac{x_3}{x_3+x_1x_2}\bigg)+b_1\log(1-x_1^2)$$
 $$= \frac{8x_2x_3}{1+x_1x_2x_3}\bigg[\log\bigg(\frac{x_2}{x_1x_3}+1\bigg)-\frac{x_2}{x_2+x_1x_3}+\log\bigg(\frac{x_3}{x_1x_2}+1\bigg)-\frac{x_3}{x_3+x_1x_2}\bigg]$$
$$+ \frac{8x_2x_3}{1+x_1x_2x_3}\log\bigg(\frac{x_1}{x_2x_3}+1\bigg)+b_1\log(1-x_1^2).$$
Note that, $\log(1+x) \geq \frac{x}{1+x}$ for all $x>-1$. Therefore, $$\log\bigg(\frac{x_2}{x_1x_3}+1\bigg)\geq\frac{x_2}{x_2+x_1x_3},$$ and$$\log\bigg(\frac{x_3}{x_1x_2}+1\bigg)\geq\frac{x_3}{x_3+x_1x_2}.$$
And thus
$$\partial_{x_1}{\varphi}(x_1,x_2,x_3)> \frac{8x_2x_3}{1+x_1x_2x_3}\log\bigg(\frac{x_1}{x_2x_3}+1\bigg)+b_1\log(1-x_1^2)>0.$$
 \end{proof}

The next lemma is about the monotonicity of $\eta$: 
 \begin{lem}\label{e}
 The function $\eta$ satisfies
$$\partial{\eta}_x(x,y)>0 \text{ and }\partial{\eta}_y(x,y)>0 \text{ for all } x,y \in (0,1).$$
 \end{lem}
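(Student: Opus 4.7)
The plan is to adapt the argument of Lemma~\ref{d}. We differentiate $\eta$ in each variable using the Rogers dilogarithm derivative
$$\mathcal{L}'(z) = -\tfrac{1}{2}\left(\frac{\log(1-z)}{z}+\frac{\log z}{1-z}\right),$$
then collect the resulting pieces into a sum of rational coefficients times logarithms of rational functions, and bound each log term using the same elementary inequalities used before.

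A useful preliminary reduction is that every argument of $\mathcal{L}$ in the formula for $\eta(x,y)$ depends on $y$ only through $u:=y^2$. Writing $E(x,u):=\eta(x,\sqrt{u})$ we have $\partial_y\eta(x,y) = 2y\,\partial_u E(x,y^2)$, so positivity of $\partial_y\eta$ on $(0,1)^2$ is equivalent to positivity of $\partial_u E$ on $(0,1)^2$. This cuts down the algebra and removes spurious square roots from the computation of the $y$-derivative.

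After differentiating and collecting terms (guided by a symbolic computation system, as was already necessary in Lemma~\ref{d}), we expect both $\partial_x\eta$ and $\partial_u E$ to take the form $\sum_i A_i(x,u)\log B_i(x,u)$ with $A_i,B_i$ rational. Terms for which $A_i$ and $\log B_i$ have the same sign are set aside as manifestly non-negative. The remaining potentially negative terms are controlled by the elementary bounds
$$\log(1+t)\geq \frac{t}{1+t}\quad(t>-1)\qquad\text{and}\qquad \log(s)\leq (s-1)-\tfrac{1}{2}(s-1)^2\quad(s\in(0,1]),$$
which were already the workhorses of Lemmas~\ref{a} and~\ref{d}. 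Substituting these bounds leaves a rational lower bound for each partial derivative whose strict positivity on $(0,1)^2$ is then verified by factoring its numerator as a sum of monomials with non-negative coefficients in the auxiliary variables $x,u,1-x,1-u$.

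The main obstacle is purely algebraic rather than conceptual: the raw derivative expressions are lengthy, and success hinges on finding the right grouping of log terms before applying the elementary inequalities, so that the bounds do not absorb too much positive contribution. As in Lemma~\ref{d}, this calibration is the delicate step; the proof for $\partial_u E$ is also expected to be somewhat harder than the one for $\partial_x\eta$ because three of the seven dilogarithm arguments depend on $u$ nonlinearly through the denominator $xu+1$. No analytic ingredient beyond the two calculus inequalities recalled above should be required.
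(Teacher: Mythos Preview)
Your strategy is essentially the paper's: differentiate, collect into a sum of rational coefficients times logarithms, then replace each $\log$ by a rational bound and check the leftover rational function is positive. Two minor corrections to your expectations are worth noting. First, the paper never needs the second--order Taylor bound $\log s\le (s-1)-\tfrac12(s-1)^2$; for both partial derivatives the single inequality $\log t\ge 1-\tfrac1t$ (applied to every log term simultaneously) already yields a rational lower bound whose numerator is a manifestly positive polynomial in $x,y$. Second, your forecast that $\partial_u E$ should be the harder of the two is reversed in practice: after simplification the $y$--derivative collapses to only four log terms (versus five for the $x$--derivative), and the same one--line substitution $\log t\mapsto 1-\tfrac1t$ disposes of it. Your change of variable $u=y^2$ is harmless but unnecessary, since all dilogarithm arguments are already rational in $y^2$ and the paper's grouping of logs makes this transparent without the substitution.
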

 \begin{proof}
 Through some standard manipulations, we can express $\partial{\eta}_x(x,y)$ in the following form:
 $$\partial{\eta}_x(x,y)=M\Bigg(A\log\left(\frac{1}{1-x}\right)+B\log(1-y^2)+C\log\left(\frac{1+xy^2}{y}\right)+D\log\left(1+xy^2\right)$$
 $$+E\log\left(\frac{x+y^2}{x+x^2y^2}\right)\Bigg),$$ where $A:=(1 - x) y^2 (1 + x^2 + 2 x y^2), \,\,\,\,B:=(1 - x) x (1 - y^4), \,\,\,\,C:=2 x y^2 (1 - x) (x + y^2), \\D:=(1 - x)^2 (1 + x) y^2, \,\,\,\,E:=x (1 + y^2) (x + y^2),$ and $$M:=\frac{8}{(1 - x) x (x + y^2) (1 + x y^2)}.$$
 Note that, $\log(t) \geq 1-\frac{1}{t}$ for all $t>0$. Therefore, for all $x,y\in(0,1)$, we have:
 $$\partial{\eta}_x(x,y)\geq M\Bigg(A.x+B.\left(\frac{-y^2}{1-y^2}\right)+C.\left(\frac{1+xy^2-y}{1+xy^2}\right)+D.\left(\frac{xy^2}{1+xy^2}\right)+E.\left(\frac{y^2(1-x^2)}{x+y^2}\right)\Bigg)$$
$$=\frac{8 y^2 (1 + x + x^2 + y^2 + 4 x y^2 + 2 x^2 y^2 + x^3 y^2+ 2 x y^4 + 3 x^2 y^4+2(1-y)(x+y^2))}{(x + y^2) (1 + x y^2)^2}.$$
 Hence
 $$\partial{\eta}_x(x,y)>0,$$ for all $x,y\in(0,1)$.
 
Now we proceed to the second inequality of this lemma. The quantity $\partial{\eta}_y(x,y)$ can be expressed as
 $$\partial{\eta}_y(x,y)=m\left(a\log\left(1-x\right)+b\log\left(\frac{1}{x}\right)+c\log\left(\frac{x+y^2}{y^2(1+xy^2)}\right)+d\log\left(\frac{1+xy^2}{1-y^2}\right)\right),$$ where $$a:=(1 - x^2) y^2 (1 - y^2), \,\,\,\,b:=x y^2 (1 - y^2) (x + y^2), \,\,\,\,c:=(1 + x) y^2 (x + y^2), $$ $$ d:=x (1 - y^2) (1 + 2 x y^2 + y^4),$$ and $$m:=\frac{16}{y (1 - y^2) (x + y^2) (1 + x y^2)}.$$\\
 Similarly, for all $x,y\in(0,1)$, we have:
 $$\partial{\eta}_y(x,y)\geq m\Bigg(a\left(1-\frac{1}{1-x}\right)+b\left(1-x\right)+c\left(1-\frac{y^2(1+xy^2)}{x+y^2}\right)+d\left(1-\frac{1-y^2}{1+xy^2}\right)\Bigg)$$
$$=\frac{16xy (1 + 2 x - x^2 + 2 y^2 + 2 x y^2 + 3 x^2 y^2 - x^3 y^2 + y^4 + 3 x y^4)}{(x + y^2) (1 + x y^2)^2}.$$
 Hence
 $$\partial{\eta}_y(x,y)>0,$$ for all $x,y\in(0,1)$.
 \end{proof}
The following lemma is an essential step in our inequalities.

\begin{lem}\label{c}The function $\varphi$ satisfies
$$\varphi(x,y,z)\geq \varphi(x,\sqrt{yz},\sqrt{yz}),$$ for all $x,y,z\in(0,1]$. Furthermore, the function $\varphi(x,y,z)-\varphi(x,\sqrt{yz},\sqrt{yz})$ is monotone increasing with respect to $x$.
\end{lem}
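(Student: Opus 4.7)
The plan is to exploit the symmetry of $\varphi$ in its last two arguments and reduce both claims to a single infinitesimal inequality. Fix $x \in (0,1]$ and $s > 0$, and parameterize the hyperbola $\{yz = s^2\}$ by $y = se^{\tau}$, $z = se^{-\tau}$. Set
$$\psi(\tau) := \varphi(x, se^{\tau}, se^{-\tau}).$$
Since $\varphi(x,y,z) = \varphi(x,z,y)$, the function $\psi$ is even in $\tau$, so $\psi(0) = \varphi(x, \sqrt{yz}, \sqrt{yz})$ and $\psi'(0) = 0$. Hence the inequality $\varphi(x,y,z) \geq \varphi(x, \sqrt{yz}, \sqrt{yz})$ is equivalent to $\psi$ being non-decreasing on $[0, \infty)$. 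Computing directly,
$$\psi'(\tau) = y\,\partial_y \varphi(x,y,z) - z\,\partial_z \varphi(x,y,z),$$
so the first claim reduces to the Schur-type inequality
$$y\,\partial_y \varphi(x,y,z) \geq z\,\partial_z \varphi(x,y,z) \qquad \text{whenever } y \geq z. \quad (\star)$$
The two sides are exchanged under $y \leftrightarrow z$, so the difference vanishes on $y = z$ and factors as $(y-z)\,R(x,y,z)$ for some function $R$ symmetric in $y$ and $z$; $(\star)$ is the assertion $R \geq 0$.

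To verify $(\star)$, I would mimic the proof of Lemma \ref{d}. Using the explicit formula for $\partial_{x_1}\varphi$ obtained there, with indices permuted to yield $\partial_y \varphi$ and $\partial_z\varphi$, one forms $y\,\partial_y\varphi - z\,\partial_z\varphi$; the contributions symmetric in $y,z$ cancel and the remainder is a sum of logarithmic terms, involving factors such as $1 \pm y^2$, $1 \pm z^2$, $y + xz$, $z + xy$, $1 + xyz$, weighted by explicit rational functions of $(x,y,z)$. These log terms can then be controlled by the estimates already used throughout Section 5, namely $\log(1+u) \geq u/(1+u)$ for $u > -1$, $1 - 1/t \leq \log t \leq t-1$ for $t > 0$, and, where needed, the sharper Taylor bound $\log t \leq (t-1) - (t-1)^2/2$ for $t \in (0,1]$. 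What remains is a polynomial inequality in $(x,y,z)$ on the region $y \geq z$, which is the main technical obstacle, but is tractable by the same bookkeeping used in the earlier lemmas.

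For the monotonicity-in-$x$ statement, set $D(x) := \varphi(x,y,z) - \varphi(x, \sqrt{yz}, \sqrt{yz})$. Since $D$ vanishes identically on the diagonal $y = z$, the inequality $D'(x) \geq 0$ is itself a geometric-mean comparison, but applied to $\partial_x \varphi$ in place of $\varphi$. Running the same parameterization $(y,z) = (se^\tau, se^{-\tau})$ with $\varphi$ replaced by $\partial_x \varphi$ reduces the claim to the mixed-partial analogue
$$y\,\partial_y \partial_x \varphi(x,y,z) \geq z\,\partial_z \partial_x \varphi(x,y,z) \qquad \text{when } y \geq z,$$
which is verified in exactly the same manner and with the same log-estimate toolkit as $(\star)$. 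Beyond the additional layer of algebraic bookkeeping, no new ideas are needed; the entire difficulty of the lemma is concentrated in establishing $(\star)$ and this mixed-partial variant.
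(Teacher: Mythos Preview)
Your reduction to the Schur-type inequality $(\star)$ via the parameterization $(y,z)=(se^\tau,se^{-\tau})$ is correct, and the analogous reduction for the $x$-monotonicity is also sound. But the plan differs from the paper's in two ways, and the second is where the actual gap sits.

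First, the paper is more economical: it proves \emph{only} the $x$-monotonicity, i.e.\ $\partial_x\varphi(x,y,z)\geq \partial_x\varphi(x,\sqrt{yz},\sqrt{yz})$, and then obtains the first inequality for free from the boundary value $\varphi(0,y,z)-\varphi(0,\sqrt{yz},\sqrt{yz})=0$. So once you have $D'(x)\geq 0$ you are done; proving $(\star)$ separately is redundant.

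Second, and more importantly, for $D'(x)\geq 0$ the paper does not pass to your mixed-partial inequality. It computes the difference $\partial_x\varphi(x,y,z)-\partial_x\varphi(x,\sqrt{yz},\sqrt{yz})$ directly and writes it as $8(A+B)$. The term $B$ is indeed handled by $\log t\geq 1-1/t$ as you suggest, but $A$ is not. After clearing denominators, $A$ is expanded as a quadratic in $x$ with coefficients $h_0,h_1,h_2$ depending only on $(y,z)$; the crux is $h_0\geq 0$, which the paper obtains from the convexity of
\[
g(t)=\frac{(1-e^t)\log(1-e^t)}{e^t}\qquad (t<0),
\]
applied at $t_1=\log y^2$, $t_2=\log z^2$ with midpoint $\log(yz)$. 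This midpoint-convexity step is precisely what makes the geometric-mean comparison close, and it is not one of the pointwise bounds in your stated toolkit ($\log(1+u)\geq u/(1+u)$, $1-1/t\leq\log t\leq t-1$, or the quadratic Taylor bound). Your sketch (``same log-estimate toolkit'', ``same bookkeeping as earlier lemmas'') glosses over exactly this point; whichever route you take to $D'(x)\geq 0$, you should expect to need a two-point convexity argument of this type, not merely one-point log estimates.
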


\begin{proof} The derivative with respect to $x$ of the function $\varphi(x,y,z)-\varphi(x,\sqrt{yz},\sqrt{yz})$ is of the following form:
 $$\partial_{x}{\varphi(x,y,z)}-\partial_{x}{\varphi(x,\sqrt{yz},\sqrt{yz})}=8(A+B),$$ where $$A:=\frac{(1 - y^2) z \log(1 - y^2)}{(y + x z) (1 + x y z)} + \frac{y (1 - z^2) \log(1 - z^2)}{(x y + z) (1 + x y z)} - \frac{2 (1 - y z) \log(1 - y z)}{(1 + x) (1 + x y z)},$$ $$B:=-\frac{ y z \log(y)}{1 + x y z} - \frac{ y z \log(z)}{1 + x y z} -\frac{2 y z \log(1 + x)}{1 + x y z} + \frac{y z \log(x y + z)}{1 + x y z} +\frac{y z \log(y + x z)}{1 + x y z}$$
 $$- \frac{(1 - x) (y - z)^2 \log(1 + x y z)}{(1 + x) (x y + z) (y + x z)}.$$
 If we can show that $\partial_{x}{\varphi(x,y,z)}-\partial_{x}{\varphi(x,\sqrt{yz},\sqrt{yz})}\geq 0$ for all $x,y,z \in(0,1)$, then it will imply that:
 $$\varphi(x,y,z)-\varphi(x,\sqrt{yz},\sqrt{yz})\geq \varphi(0,y,z)-\varphi(0,\sqrt{yz},\sqrt{yz})=0.$$
 
Our aim will be to show that both $A$ and $B$ are non-negative for all $x,y,z \in (0,1)$. As
 $$A.(1+xyz)(xy+z)(xz+y)(1+x)= x^2h_2(y,z)+(x-x^2)h_1(y,z)+(1-x^2)h_0(y,z),$$ where
 {\footnotesize{$$h_2(y,z):=2z(y+z)(1 - y^2)\log(1 - y^2)+ 2y(y+z)(1 - z^2)\log(1 - z^2) - 2 (y+z)^2(1 - y z) \log(1 - y z), $$}}
 {\footnotesize{$$h_1(y,z):=z(y + z)(1 - y^2) \log(1 - y^2) +y(y + z) (1 - z^2)\log(1 - z^2) - 2(y^2+z^2)(1-yz) \log(1 - y z),$$}}
 and {\footnotesize{$$h_0(y,z):=z^2(1 - y^2) \log(1 - y^2)+ y^2 (1 - z^2)\log(1 - z^2) - 2yz(1 - y z) \log(1 - y z),$$}}
the non-negativity of $A$ is implied from the following:
 \begin{claim}\label{g}
 $$h_0(y,z)\geq0,\,\,\,\,h_1(y,z)\geq0, \text{ and }h_2(y,z)\geq0$$ for all $ 0<y,z<1$.
 \end{claim}
 \begin{proof}[Proof of claim]Note that:
 $$\frac{h_0(y,z)}{y^2z^2}=\frac{1 - y^2}{y^2} \log(1 - y^2)+ \frac{1 - z^2}{z^2} \log(1 - z^2) - 2\frac{1 - y z}{yz} \log(1 - y z) .$$
 We consider the following function:
 $$g(t):=\frac{(1-e^t)\log(1-e^t)}{e^t},$$ where $t<0$. Then $$g''(t)=\frac{1}{1-e^t}+\frac{\log(
 1 - e^t)}{e^t},$$
 which is easily checked to be positive for all $t<0$. Hence $g$ is convex on its domain. Therefore, for all negative numbers $t_1$ and $t_2$, we have:
$$g(t_1)+g(t_2) \geq 2g\left(\frac{t_1+t_2}{2}\right).$$
By substituting $t_1, t_2$ by $\log(y^2), \log(z^2)$ respectively, we obtain:
$$\frac{1 - y^2}{y^2} \log(1 - y^2)+ \frac{1 - z^2}{z^2} \log(1 - z^2) \geq 2\frac{1 - y z}{yz} \log(1 - y z) .$$
This implies that $h_0(y,z)\geq0$ for all $ 0<y,z<1$.

Now we prove that $h_2(y,z)\geq 0$. Indeed,
$$\frac{h_2(y,z)}{2(y+z)}=z(1 - y^2) [\log(1 - y^2)- \log(
 1 - y z) ]+ y (1 - z^2) [\log(1 - z^2)-\log(
 1 - y z) ]$$
 $$=z(1 - y^2) \log\left(\frac{1 - y^2}{1 - y z}\right)+ y (1 - z^2) \log\left(\frac{1 - z^2}{1 - y z}\right)$$
 $$\geq z(1 - y^2) \left(1-\frac{1 - yz}{1 - y^2}\right)+ y (1 - z^2)\left(1-\frac{1 - yz}{1 - z^2}\right)= 0,$$
 for all $y,z \in (0,1)$.\\

Lastly, $h_1(y,z)$ is non-negative because of the following:
$$\frac{h_1(y,z)}{y+z}=\left[z(1-y^2)\log(1-y^2)+y(1-z^2)\log(1-z^2)\right]-2\frac{(y^2+z^2)(1-yz)}{y+z}\log(1-yz)$$ $$=\left[\frac{h_2(y,z)}{2(y+z)}+(z(1-y^2)+y(1-z^2))\log(1-yz)\right]-2\frac{(y^2+z^2)(1-yz)}{y+z}\log(1-yz)$$
 $$=\frac{h_2(y,z)}{2(y+z)}-\frac{(y-z)^2(1-yz)}{y+z}\log(1-yz) \geq 0,$$ for all $y,z \in (0,1)$.
 This completes the proof of Claim \ref{g}. 
 \end{proof}
Finally, we prove the non-negativity of $B$ as follows:
$$B=\frac{yz}{1+xyz}\log\left(\frac{(xy+z)(xz+y)}{yz(1+x)^2} \right)- \frac{(1 - x) (y - z)^2 \log(1 + x y z)}{(1 + x) (x y + z) (y + x z)}$$ $$\geq \frac{yz}{1+xyz}\left(1-\frac{yz(1+x)^2}{(xy+z)(xz+y)} \right)- \frac{(1 - x) (y - z)^2 xyz}{(1 + x) (x y + z) (y + x z)}$$ 
$$=\frac{x^2 y (y - z)^2 z (2 -yz + xy z)}{(1 + x) (x y + z) (y + 
 x z) (1 + x y z)}\geq 0, \text{ for all } x,y,z \in(0,1).$$
\end{proof}
\begin{remk}
The previous proof tells us that
$$\frac{\partial{\varphi}}{\partial{x}}(x,y,z)\geq \frac{\partial{\varphi}}{\partial{x}}(x,\sqrt{yz},\sqrt{yz}).$$ Hence, Lemma \ref{d} also follows from the following simpler inequality which contains only two variables:
$$\frac{\partial{\varphi}}{\partial{x}}(x,y,y)>0.$$
\end{remk}
\begin{lem}
The function $\varphi$ satisfies
$$\varphi(x,y,z)\geq \varphi(\sqrt[3]{xyz},\sqrt[3]{xyz},\sqrt[3]{xyz}),$$ for all $x,y,z\in(0,1]$
\end{lem}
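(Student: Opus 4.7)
Since $\varphi$ is manifestly symmetric in its three arguments (inspection of the explicit formula), Lemma~\ref{c} in fact gives the geometric averaging inequality with respect to any pair of coordinates, not only the last two. My plan is to iterate this averaging to drive $(x,y,z)$ to the equilateral triple $(t,t,t)$ with $t=\sqrt[3]{xyz}$, and then conclude by continuity of $\varphi$.

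One application of Lemma~\ref{c} already gives $\varphi(x,y,z)\geq \varphi(x,s,s)$ with $s=\sqrt{yz}$, and the geometric mean $t=\sqrt[3]{xyz}$ is preserved. I would then encode any triple with two equal coordinates as a pair $(u,v)$ representing $(u,v,v)$, and iterate as follows: after the symmetric reordering $(v,u,v)$, Lemma~\ref{c} gives $\varphi(v,u,v)\geq \varphi(v,\sqrt{uv},\sqrt{uv})$, hence the new pair $(u_{n+1},v_{n+1})=(v_n,\sqrt{u_nv_n})$, starting from $(u_0,v_0)=(x,s)$. The product $u_nv_n^2=xyz=t^3$ is invariant along the iteration, and Lemma~\ref{c} produces the monotone chain
$$
\varphi(x,y,z)\;\geq\;\varphi(u_0,v_0,v_0)\;\geq\;\varphi(u_1,v_1,v_1)\;\geq\;\cdots.
$$

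The key analytic step is to show $(u_n,v_n)\to(t,t)$. Passing to $(p_n,q_n)=(\log u_n,\log v_n)$ turns the recursion into a linear map with matrix $\left(\begin{smallmatrix}0&1\\ 1/2&1/2\end{smallmatrix}\right)$, whose eigenvalues are $1$ (on the diagonal $(1,1)^{\top}$) and $-1/2$ (contracting the complementary direction). Combined with the conserved quantity $p_n+2q_n=\log(xyz)$, this forces $(p_n,q_n)\to(\log t,\log t)$, hence $(u_n,v_n)\to(t,t)$. Continuity of $\varphi$ on $(0,1]^3$ then lets me pass to the limit in the chain above to obtain $\varphi(x,y,z)\geq \varphi(t,t,t)$. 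The only step that is not a direct invocation of Lemma~\ref{c} or of continuity is the eigenvalue analysis, which is a routine two-dimensional calculation, so I do not anticipate a serious obstacle.
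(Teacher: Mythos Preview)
Your argument is correct and is essentially the paper's own proof: both iterate Lemma~\ref{c} (using the symmetry of $\varphi$ to average successive pairs of coordinates) while preserving the product $xyz$, and then pass to the limit by continuity; two steps of your recursion $(u_{n+1},v_{n+1})=(v_n,\sqrt{u_nv_n})$ coincide with one application of the paper's map $f(x,y,z)=(x^{1/2}y^{1/4}z^{1/4},x^{1/2}y^{1/4}z^{1/4},y^{1/2}z^{1/2})$. The only cosmetic difference is that the paper proves convergence by an explicit inductive formula for $f^n$ (with decay $(1/4)^n$), whereas you do the equivalent eigenvalue computation on the log-linear system (your eigenvalue $-1/2$ squared gives their $1/4$).
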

\begin{proof}
By applying Lemma \ref{c} two times, we obtain:
\begin{equation}\label{cc}
\varphi(x,y,z)\geq \varphi(x,\sqrt{yz},\sqrt{yz})\geq \varphi\left(\sqrt{x\sqrt{yz}},\sqrt{x\sqrt{yz}},\sqrt{yz}\right)
\end{equation}
We define a function $f$ from $(0,1]^3$ to $(0,1]^3$ as follows:
$$f(x,y,z):=(x^{\frac{1}{2}}y^{\frac{1}{4}}z^{\frac{1}{4}},x^{\frac{1}{2}}y^{\frac{1}{4}}z^{\frac{1}{4}},y^{\frac{1}{2}}z^{\frac{1}{2}}),$$
then from (\ref{cc}), we have a monotonically decreasing sequence:
\begin{equation}\label{vv}
\varphi(x,y,z) \geq \varphi(f(x,y,z)) \geq \varphi(f^2(x,y,z))\geq ... \geq \varphi(f^n(x,y,z)) \geq ...
\end{equation}
By induction, we can show that:
$$f^n(x,y,z)=(x^{a_n}y^{b_n}z^{b_n},x^{a_n}y^{b_n}z^{b_n},x^{2b_n-a_n}y^{a_n}z^{a_n})$$
for all $n \in \mathbb{N}$, in which $a_n=\frac{1}{3}+\frac{2}{3}\left(\frac{1}{4}\right)^n$, and $b_n=\frac{1}{3}-\frac{1}{3}\left(\frac{1}{4}\right)^n$. Hence 
$$\lim_{n \to \infty}(f^n(x,y,z))=(x^{\frac{1}{3}}y^{\frac{1}{3}}z^{\frac{1}{3}},x^{\frac{1}{3}}y^{\frac{1}{3}}z^{\frac{1}{3}},x^{\frac{1}{3}}y^{\frac{1}{3}}z^{\frac{1}{3}}).$$
Therefore, from (\ref{vv}) and the continuity of the function $\varphi$ on its domain, we have:
$$\varphi(x,y,z) \geq \lim_{n \to \infty}\varphi(f^n(x,y,z))=\varphi(\lim_{n \to \infty}(f^n(x,y,z))=\varphi(\sqrt[3]{xyz},\sqrt[3]{xyz},\sqrt[3]{xyz}).$$

\end{proof}
The following lemma is a relation between the two functions $\varphi$ and $\eta$:
\begin{lem}\label{z}The functions $\varphi$ and $\eta$ satisfy:
$$\eta(x,y)\geq \varphi(x,y,y),$$ for all $x,y,z\in(0,1]$. Furthermore, the function $\eta(x,y)-\varphi(x,y,y)$ is monotone increasing with respect to $x$.
\end{lem}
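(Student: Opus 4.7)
The plan is to adapt the template of Lemma \ref{c} verbatim. For fixed $y \in (0,1)$, set
$$F_y(x) := \eta(x,y) - \varphi(x,y,y),$$
and view this as a function of $x \in (0,1]$. Both assertions of the lemma will follow at once from the two claims: (i) $\lim_{x \to 0^+} F_y(x) = 0$, and (ii) $F_y'(x) \geq 0$ for all $x \in (0,1)$. Indeed, (ii) says the difference is monotone increasing in $x$, and together with (i) it forces $F_y(x) \geq 0$.

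For (i), I would substitute $x=0$ directly into the explicit formula for $\eta(x,y)$ given in Section 2. The first two dilogarithm arguments become equal and cancel, as do the third and fourth and the sixth and seventh pairs after simplification, leaving $-16\mathcal{L}(y^2) - 16\mathcal{L}(1-y^2) + 16\mathcal{L}(1)$; the Euler reflection identity $\mathcal{L}(t) + \mathcal{L}(1-t) = \mathcal{L}(1) = \pi^2/6$ then makes this vanish. For $\varphi(0,y,y)$ one checks by an analogous limit computation (using the formula for $\varphi(x_1,x_2,x_3)$ with $x_1\to 0$, $x_2 = x_3 = y$) that several arguments of $\mathcal{L}$ tend to simple values and combine via the same reflection identity to give $0$. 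This matches the geometric expectation: $x \to 0^+$ corresponds to $\ell(\beta) \to \infty$, which degenerates the pair of pants.

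For (ii), the strategy is exactly the one successfully used in Lemmas \ref{d}, \ref{e}, and \ref{c}. Compute $\partial_x \eta(x,y)$ and $\partial_x \varphi(x,y,y)$ in closed form using the formulas already derived in those earlier proofs. After placing everything over the common denominator and grouping, the difference $F_y'(x)$ can be written as a rational function times a combination of logarithmic terms of the forms $\log(1-x)$, $\log(1+x)$, $\log(1-y^2)$, $\log(1+xy^2)$, $\log(x+y^2)$, $\log(1+x+y^2+xy^2)$, and their relatives. I then apply the elementary bounds
$$\log t \geq 1 - \tfrac{1}{t} \quad (t>0), \qquad \log(1+t) \geq \tfrac{t}{1+t} \quad (t>-1),$$
which are the same two tools used in Lemmas \ref{d}, \ref{e}, \ref{c}, to each grouped logarithm, reducing the problem to showing that a specific rational function in $(x,y)$ is non-negative on $(0,1)^2$. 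Clearing denominators reduces this in turn to the non-negativity of a polynomial in $x,y$ with manifestly non-negative coefficients (or one that factors as a sum of products of $(1-x)$, $(1-y)$, $(1+x)$, $(1+y)$ with non-negative polynomials), which can be verified explicitly.

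The main obstacle is purely computational: finding the right partition of terms so that each grouping has a sign consistent with the chosen log inequality (for a log of a quantity in $(0,1)$ the inequality $\log t \geq 1 - 1/t$ points the wrong way unless paired with a non-positive coefficient, and vice versa). As in Lemma \ref{c}, where terms had to be separated into auxiliary pieces $A$ and $B$, here too one likely needs to split $F_y'(x)$ into two or three blocks and bound each separately. These manipulations were carried out and verified with the aid of a computer algebra system.
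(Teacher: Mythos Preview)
Your proposal is correct and follows essentially the same approach as the paper: show $\partial_x\eta(x,y)-\partial_x\varphi(x,y,y)>0$ via the bound $\log t\geq 1-\tfrac{1}{t}$, then integrate from the boundary value $\eta(0,y)-\varphi(0,y,y)=0$. The actual computation is in fact cleaner than you anticipate: after grouping the derivative as a positive multiple of five log terms (with arguments $1-y^2$, $1/(1+xy^2)$, $(1+x)/(1+xy^2)$, $(x+y^2)/(1+xy^2)$, $(x+y^2)/(x(1+xy^2))$), a single uniform application of $\log t\geq 1-\tfrac{1}{t}$ to each term already yields the manifestly positive rational function $\dfrac{8(1-x)xy^4(1-y^2)}{(1+x)(x+y^2)(1+xy^2)}$, so no block splitting in the style of Lemma~\ref{c} is needed.
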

\begin{proof}
We will prove that:
 $$\partial_x{\eta(x,y)}-\partial_x{\varphi(x,y,y)}>0,$$ for all $x,y \in(0,1)$. Indeed, 
 $$\partial_x{\eta(x,y)}-\partial_x{\varphi(x,y,y)}=m\Bigg(a \log(1 - y^2)+b\log\left(\frac{1}{1 + x y^2}\right)+c \log\left(\frac{1 + x}{1 + x y^2}\right)$$
 $$+d\log\left(\frac{x + y^2}{1 + x y^2}\right)+h\log\left(\frac{x + y^2}{x (1 + x y^2)}\right) \Bigg),$$ where $a:=(1 - x) x (1 - y^2)^2,\,\,\,\,b:=(1 - x) x (1 - y^2)^2,\,\,\,\,c:=(1 - x) (1 + x)^2 y^2, $ $$d:=x (1 + x) y^2 (x + y^2),\,\,\,\,h:=x (1 - y^2) (x + y^2),$$ and
 $$m:=\frac{8}{x (1 + x) (x + y^2) (1 + x y^2)}$$\\
 Note that, $\log(t) \geq 1-\frac{1}{t}$ for all $t>0$. Therefore, for all $x,y\in(0,1)$, we have:
 $$\partial_x{\eta(x,y)}-\partial_x{\varphi(x,y,y)}\geq m\Bigg(a \left(1-\frac{1}{1 - y^2}\right)+b\left(1-(1 + x y^2)\right)+c \left(1-\frac{1 + x y^2}{1 + x}\right)$$
  $$+d\left(1-\frac{1 + x y^2}{x + y^2}\right)+h\left(1-\frac{x (1 + x y^2)}{x + y^2}\right) \Bigg)$$
  $$=\frac{8 (1 - x) x y^4 (1 - y^2)}{(1 + x) (x + y^2) (1 + x y^2)}> 0. $$
  This implies that:  $${\eta(x,y)}-{\varphi(x,y,y)} \geq \eta(0,y)-\varphi(0,y,y)=0,$$
  for all $x,y \in (0,1]$.
\end{proof}

This completes the proofs of the technical results in this note. We end with an example in a similar vein, which can be obtained by using the same methods, and which illustrates that the function $\varphi$ has a wealth of yet unexplored properties. 
\begin{lem}\label{zz}The function $\varphi$ satisfies:
$$\varphi(x,yz,1)\geq \varphi(x,y,z),$$ for all $x,y,z\in(0,1]$. Furthermore, the function $\varphi(x,yz,1)-\varphi(x,y,z)$ is monotone increasing with respect to $x$.
\end{lem}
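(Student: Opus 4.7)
The plan is to run the same two-step machinery used in Lemma \ref{c} and Lemma \ref{z}. Set
$$\Delta(x,y,z) := \varphi(x,yz,1) - \varphi(x,y,z),$$
and reduce the lemma to proving (a) $\partial_x \Delta(x,y,z) \geq 0$ for all $x,y,z\in (0,1)$, and (b) $\lim_{x \to 0^+} \Delta(x,y,z) = 0$. Claim (a) is precisely the stated monotonicity in $x$, and together with (b) it gives $\Delta(x,y,z) \geq 0$ on $(0,1]^3$, which is the first assertion. The boundary computation (b) should be routine: in the explicit formula for $\varphi(x_1,x_2,x_3)$ in Section 2, the variable $x_1$ enters multiplicatively in nearly every argument of $\mathcal{L}$, so at $x_1 = 0$ the arguments either collapse to $0$ or $1$ (fixed points of $\mathcal{L}$) or simplify drastically, leaving a residual expression that depends on $(x_2,x_3)$ only through the symmetric combination $x_2 x_3$; specialising to $(yz,1)$ and to $(y,z)$ then gives the same value.

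For (a), I would first simplify $\varphi(x,yz,1)$ by exploiting the collapse $1 - x_3^2 = 0$ at $x_3 = 1$, which annihilates two of the four $\mathcal{L}$-terms in each summand of the index $\{i,j,k\}$ with $k = 3$. Differentiating the resulting shorter expression in $x$ and subtracting $\partial_x \varphi(x,y,z)$ as computed in Lemma \ref{d}, I expect to obtain an expression of the shape
$$\partial_x \Delta(x,y,z) = M(x,y,z) \sum_{k} C_k(x,y,z)\,\log T_k(x,y,z),$$
with a manifestly positive prefactor $M$ and rational coefficients $C_k$ and arguments $T_k$. To conclude I would apply, term by term, the one-variable logarithm bounds used throughout the paper, namely $\log t \geq 1 - 1/t$, $\log(1+t) \geq t/(1+t)$, and the Taylor bound $\log t \leq (t-1) - (t-1)^2/2$ from Lemma \ref{a}, choosing the bound separately for each summand so that every logarithm is replaced by a rational lower bound. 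The remaining rational inequality on $(0,1)^3$ should factor through a product of an obviously positive prefactor, a factor of the form $(1-x)$, $(1-yz)$, or $(y-z)^2$, and a polynomial with non-negative coefficients, exactly as in the treatment of $A$ and $B$ in Lemma \ref{c} and of the analogous estimate in Lemma \ref{z}.

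The principal obstacle is choosing the right grouping of the logarithmic terms before applying the pointwise $\log$-bounds. In Lemma \ref{c} the grouping was guided by the $y \leftrightarrow z$ symmetry and by the convexity of $g(t) = (1-e^t)\log(1-e^t)/e^t$; here that symmetry is partly broken because one of the three arguments is fixed at $1$, so a careful pairing is required to match the contributions of $\varphi(x,yz,1)$ against those of $\varphi(x,y,z)$. In practice I expect this to involve some experimentation, possibly combined with rewriting certain $\mathcal{L}$-arguments via the five-term relation, until a grouping is found in which each $\log$ can be dominated cleanly; once such a grouping is in place, the remaining positivity check is purely mechanical and, in the spirit of the remark at the start of Section 5, is readily verified by computer algebra.
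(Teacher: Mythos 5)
The paper gives no proof of this lemma: it is stated at the end of Section~5 as ``an example in a similar vein, which can be obtained by using the same methods,'' so there is no explicit argument to compare against. Your proposal correctly identifies what those methods are --- reduce to showing $\partial_x\Delta \ge 0$ together with a boundary computation at $x\to 0^+$, then attack the derivative by grouping $\log$-terms and applying the pointwise bounds $\log t \ge 1-1/t$, $\log(1+t)\ge t/(1+t)$, etc. --- and this is exactly the two-step scheme of Lemmas~\ref{c} and~\ref{z}, so the strategy is the right one.

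However, there is a genuine gap: the core step, the verification that $\partial_x\Delta(x,y,z)\ge 0$, is never carried out. You write that you ``expect to obtain an expression of the shape'' $M\sum_k C_k\log T_k$ and that finding the right grouping ``will involve some experimentation, possibly combined with rewriting certain $\mathcal{L}$-arguments via the five-term relation, until a grouping is found.'' In the paper's parallel proofs this grouping is the entire content of the argument (cf.\ the identification of the quantities $A$, $B$, $h_0$, $h_1$, $h_2$ in Lemma~\ref{c}); without exhibiting the actual decomposition and checking the resulting rational inequality, nothing has been proved. A plan that defers the only nontrivial computation is not a proof. On the boundary step, your conclusion is correct but your reasoning is slightly off: it is not merely that $\varphi(0,x_2,x_3)$ depends on $x_2,x_3$ only through $x_2x_3$; in fact $\varphi(0,\cdot,\cdot)\equiv 0$. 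Setting $x_1=0$ in the explicit formula and using Rogers' reflection identity $\mathcal{L}(u)+\mathcal{L}(1-u)=\mathcal{L}(1)$, all terms cancel identically, which is the cleaner reason that $\Delta(0,y,z)=0$ (and also the reason the analogous boundary values vanish in Lemmas~\ref{c} and~\ref{z}). You should state and use that identity explicitly rather than appeal to a symmetry in $x_2x_3$.
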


{\it Addresses:}\\
Department of Mathematics, University of Luxembourg, Esch-sur-Alzette, Luxembourg\\
Department of Mathematics, National University of Singapore, Singapore

{\it Emails:}\\
minh.doan@uni.lu\\
hugo.parlier@uni.lu\\
mattansp@nus.edu.sg

\end{document}